 \newlength{\baseunit}               
\newtheorem{theorem}{Theorem}
\newtheorem{lemma}[theorem]{Lemma}
\newtheorem{remark}[theorem]{Remark}
\newtheorem{prop}[theorem]{Proposition}
\newcommand{\cO}{{\mathcal O}}
\newcommand{\mZ}{\mathbb{Z}}
\newcommand{\la}{\lambda}
\newcommand{\op}{\operatorname}
\DeclareMathOperator{\Hom}{Hom}   
\DeclareMathOperator{\End}{End}   
   \DeclareMathOperator{\Mod}{mod}
\DeclareMathOperator{\gmod}{gmod}
\begin{document}
\pagestyle{plain}
\title[draft]{Categorified Jones-Wenzl Projectors:\\ a comparison}
\author{Catharina Stroppel}
\author{Joshua Sussan}
\maketitle

\begin{abstract}
We explicitly describe a relationship between the Lie theoretic and topological categorification of the Jones-Wenzl projector.
\end{abstract}

\section{Introduction}
The Jones-Wenzl projector $p_n$ is an endomorphism of the $n$-fold tensor product $V^{\otimes n}$ of the natural representation of the quantum group $U_q(\mathfrak{sl}_2)$ for $\mathfrak{sl}_2$. It appears as a standard tool for decomposing explicitly the finite dimensional representations and plays an important role in the definition and construction of the corresponding Reshetikhin-Turaev $3$-manifold invariant. A categorification of these projectors seems therefore a necessary tool in establishing a categorification of the $3$-manifold invariant.

A Lie theoretic categorification of the Jones-Wenzl projector $p_n$ was constructed in ~\cite{FSS1}, based on ~\cite{FKS},
using a graded version ${}^{\mathbb{Z}} \mathcal{O}(\mathfrak{gl}_n)$ of the category $\cO$ of highest weight modules for $ \mathfrak{gl}_n$. The direct sum $\bigoplus_{i=0}^n({}^{\mathbb{Z}} \mathcal{O}_i(\mathfrak{gl}_n))$
of maximal singular blocks
is a category whose Grothendieck group $V^{\otimes n}$ is naturally isomorphic as $\mZ[q,q^{-1}]$-module to $V^{\otimes n}$.
A major problem in categorifying the projection lies in the fact that it is only defined over the rational numbers. However,
the Jones-Wenzl projector can be defined integrally when passing to the completion of $V^{\otimes n}$. The resulting $\mZ[q^{-1}][[q]]$-module then  got realized in \cite{FSS1} as a certain completed Grothendieck group corresponding to certain subcategory of the one-sided unbounded derived category $\oplus_{i=0}^n D^{<}({}^{\mathbb{Z}} \mathcal{O}_i(\mathfrak{gl}_n))$.

A more topological categorification of the Jones-Wenzl projector was found by Cooper and Kruskhal ~\cite{CoKr} using techniques of Bar-Natan ~\cite{BN}. It is based on Khovanov's arc algebra from \cite{KhovJones}. The latter is known to be directly related to the highest weight categories for $\mathfrak{gl}_n$ mentioned above by \cite{StrSpringer} and \cite{BS3} via an equivalence of abelian categories composed with a Koszul duality. As stated already in \cite{FSS1}, the two categorifications of the Jones-Wenzl projector are therefore supposed to be related by an equivalence of categories composed with Koszul duality. In this note we verify this conjecture for the case where an explicit description of the Cooper-Krushkal categorified projector is available. We explicitly compare the Lie theoretic with the topological construction. For general $n$, the statement should follow from the (abstract) characterization of the categorified Jones-Wenzl projector, but (apart from $n=3$) it would need much more effort (if possible at all) to make the relationship and constructions explicit.

Finally we want to mention yet another categorification of the Jones-Wenzl projector due to Rozansky, \cite{Roz} which is quite different in nature. It arises as a limit of a certain direct system of complexes. Lie theoretically this corresponds to taking limits of so-called shuffling functors (and their Koszul duals, \cite[Section 6.5]{MOS}) which we will not work out in detail here.

We first recall in Section ~\ref{lietheory} the abstract Lie theoretic construction which should give already a rough idea of the general categorification.  After that we specialize to the case $n=2$ where we work out the setup in detail and make the categories and functors explicit.  We finally prove in Section ~\ref{dualitytheoremsection} the main result saying that the Lie theoretic construction of the categorified Jones-Wenzl projector from ~\cite{FSS1} is related to the topological construction from ~\cite{CoKr} by Koszul duality.

\subsection*{Acknowledgements}
The first author is deeply grateful to Olaf Schn\"urer for several interesting discussions. The second author is grateful for the support from the Max Planck Institute for Mathematics in Bonn and its excellent working conditions.

\section{A Lie theoretic categorification}
\label{lietheory}
Let $ \mathfrak{gl}_n = \mathfrak{n}^- \oplus \mathfrak{h} \oplus \mathfrak{n}^+ $ be the triangular decomposition of the Lie algebra of complex $n\times n$ matrices into the direct sum of strictly lower triangular, diagonal, and strictly upper triangular matrices respectively.  Set $\mathfrak{b} = \mathfrak{h} \oplus \mathfrak{n}^+$.
Define $ \mathcal{O}(\mathfrak{gl}_n) $ to be the full subcategory of $ \mathcal{U}(\mathfrak{gl}_n) $-modules which are finitely generated, diagonalizable with respect to $ \mathfrak{h} $, and locally finite with respect to
$ \mathcal{U}(\mathfrak{b})$.
The dual space $ \mathfrak{h}^* $ has basis $ \lbrace e_1, \ldots, e_n \rbrace $.
Let $ \lambda_i = e_1 + \cdots + e_i - \rho $ where $ \rho $ is half the sum of the positive roots.
Let $ \mathcal{O}_i (\mathfrak{gl}_n) $ be the full subcategory of $ \mathcal{O}_i (\mathfrak{gl}_n) $ consisting of modules which have generalized central character $\chi_{\la_i}$ corresponding to $\lambda_i$ under the Harish-Chandra isomorphism.
The category  $\mathcal{O}_i (\mathfrak{gl}_n)$ is abelian, $\mathbb{C}$-linear with finite dimensional hom-spaces and has enough projective objects. For each $i$, fixing a minimal projective generator $P=P_i$ provides an equivalence of categories between $\mathcal{O}_i (\mathfrak{gl}_n)$     and the category of finitely generated right modules over the complex finite dimensional algebra $A:=\End_\cO(P)$. This algebra can be equipped with a natural Koszul grading. The category of finite dimensional graded $A$-modules can therefore be viewed as a graded version of $\mathcal{O}_i (\mathfrak{gl}_n)$ and is denoted by $ {}^{\mathbb{Z}} \mathcal{O}_i (\mathfrak{gl}_n) $.
The Grothendieck groups $ K({}^{\mathbb{Z}} \mathcal{O}_i (\mathfrak{gl}_n)) $, for $ i=1,\ldots,n$ are naturally $ \mathbb{Z}[q,q^{-1}]$-modules, where $q$ and $q^{-1}$ act by shifting up resp. down the degree by one.

Let  $\mathcal{U}_q(\mathfrak{sl}_2)$ be the generic quantum group of $\mathfrak{sl}_2$, that is the $\mathbb{C}(q)$-algebra with generators $E,F,K^{\mp}$ and the usual relations. Let $V_i$ denote the irreducible representation (of type I) of dimension $i+1$, see e.g. \cite{FSS1} for details.
 As mentioned already in the introduction, the $n$-th tensor power $V^{\otimes n}= V_1^{\otimes n}$ of the $2$-dimensional irreducible type I representation $V_1$ of $ \mathcal{U}_q(\mathfrak{sl}_2) $  (the quantum version of the vector representation) arises as the Grothendieck group $K_0$ of a direct sum of graded versions of blocks of category $\cO$:

\begin{prop}
\label{catoftensors}
\cite[Special case of Theorem 4.1]{FKS}
There exists a canonical isomorphism of  $\mathcal{U}_q(\mathfrak{sl}_2)$-modules
$$ \bigoplus_{i=0}^n \mathbb{C}(q) \otimes_{\mathbb{Z}[q,q^{-1}]} K_0({}^{\mathbb{Z}} \mathcal{O}_i (\mathfrak{gl}_n)) \cong V_1^{\otimes n}. $$
where the action on the left hand side is induced from certain exact functors.
\end{prop}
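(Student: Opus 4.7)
The plan is to construct the isomorphism on a natural basis and then to verify that the exact functors on $\bigoplus_i {}^\mZ\cO_i(\mathfrak{gl}_n)$ lifting $E$ and $F$ categorify the quantum group action on the tensor product side. First I would identify the Grothendieck group combinatorics. The block ${}^\mZ\cO_i(\mathfrak{gl}_n)$ has integral infinitesimal character parametrized by $\lambda_i = e_1 + \cdots + e_i - \rho$, whose stabilizer in the Weyl group $S_n$ is $S_i \times S_{n-i}$. Thus the isomorphism classes of simple objects (up to internal grading shift) are in bijection with $S_n/(S_i \times S_{n-i})$, equivalently with $\{0,1\}$-sequences of length $n$ having exactly $i$ zeros and $n-i$ ones. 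This matches the dimension count $\binom{n}{i}$, and summed over $i$ gives $2^n = \dim_{\mC(q)} V_1^{\otimes n}$.

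Next I would define the $\mC(q)$-linear map explicitly. Send the class of the simple object (or, equivalently up to a triangular change of basis, the Verma module) indexed by a $\{0,1\}$-sequence $\mathbf{a} = (a_1, \ldots, a_n)$ with $i$ zeros to the standard tensor $v_{a_1} \otimes \cdots \otimes v_{a_n}$ in $V_1^{\otimes n}$, where $v_0, v_1$ is a standard basis of $V_1$. This is tautologically a $\mZ[q,q^{-1}]$-module isomorphism after base change to $\mC(q)$.

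Then I would introduce the exact endofunctors $\cE$ and $\cF$ of $\bigoplus_i {}^\mZ\cO_i(\mathfrak{gl}_n)$ given by graded lifts of the usual translation functors that move between adjacent singular blocks (composed, as in \cite{FKS}, with shifted translations through walls that realize the relevant $q$-divided powers). On the Grothendieck group each of these functors acts by the sum $\sum_{k=1}^n 1 \otimes \cdots \otimes E \otimes \cdots \otimes 1$ (with appropriate $K^{\pm 1}$ factors dictated by the coproduct), once one tracks the effect on a Verma module: translating through the $k$-th wall changes precisely the $k$-th entry of $\mathbf{a}$ from $1$ to $0$ (for $\cF$) or $0$ to $1$ (for $\cE$), and the graded shifts produced by the Koszul grading account for the powers of $q$ prescribed by the coproduct $\Delta(E)$ and $\Delta(F)$ in $\mathcal{U}_q(\SL_2)$. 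This is the heart of the argument and is the place where one needs to invoke the detailed analysis of graded translation functors from \cite{FKS}.

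Finally, I would verify quantum $\SL_2$-equivariance. The exactness of translation functors, together with standard adjunctions between on-the-wall and off-the-wall translations, implies the quantum commutator relation $EF-FE = (K-K^{-1})/(q-q^{-1})$ on the Grothendieck group; the Cartan element $K$ is categorified by the grading shift combined with the scalar $q^{n-2i}$ on the $i$-th summand. Since the map is an isomorphism of free $\mC(q)$-modules of the same rank and intertwines the generators $E, F, K^{\pm 1}$, it is an isomorphism of $\mathcal{U}_q(\SL_2)$-modules. The main obstacle, as indicated, is precisely the bookkeeping of graded shifts in the Koszul grading needed to recover the correct coproduct-weighted sum; everything else is formal once this computation is in place.
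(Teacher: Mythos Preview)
The paper does not prove this proposition at all: it is quoted verbatim as a special case of \cite[Theorem~4.1]{FKS} and treated as input. So there is no ``paper's own proof'' to compare against; your sketch is really a sketch of the argument in \cite{FKS}, and as such it is broadly on the right track.

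One point deserves tightening. You describe the functors $\cE,\cF$ as ``translating through the $k$-th wall'' and say this ``changes precisely the $k$-th entry of $\mathbf{a}$''. That is not how the functors are built: in \cite{FKS} the functors categorifying $E$ and $F$ are (graded lifts of) tensoring with the natural representation of $\mathfrak{gl}_n$ (respectively its dual) followed by projection to the neighbouring block. There is a \emph{single} such functor from block $i$ to block $i\pm 1$, not one for each $k$. Its effect on a Verma module is that the image has a Verma filtration whose subquotients are exactly the Vermas obtained by flipping one entry of $\mathbf{a}$, and the graded shifts of these subquotients encode the $K^{\pm 1}$-factors from the coproduct. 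So the sum $\sum_k 1\otimes\cdots\otimes E\otimes\cdots\otimes 1$ that you write down is not a decomposition of the functor but the shape of a Verma filtration of its value on a single Verma. Once you phrase it this way your outline goes through, and the relation $[E,F]=(K-K^{-1})/(q-q^{-1})$ then follows from the biadjunction between the two translation functors together with the identification of the unit/counit with cup/cap maps, as carried out in \cite{FKS}.
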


There is also a categorification of $ V_n$, the $(n+1)$-dimensional, irreducible $ \mathcal{U}_q(\mathfrak{sl}_2) $-module of type I.  Let $ \op{Gr}(i,n) $ be the variety of complex $ i $-dimensional subspaces in $ \mathbb{C}^n$ and denote its singular cohomology with complex coefficients by $C^i = H^*(\op{Gr}(i,n)) $.  Denote the category of finitely generated, graded, right $ C^i $-modules by $ \gmod-C^i$.

\begin{prop}\cite[Theorem 6.2]{FKS}
\label{catofirr}
There is a canonical isomorphism of $\mathcal{U}_q(\mathfrak{sl}_2)$-modules
$$ \bigoplus_{i=0}^n \mathbb{C}(q) \otimes_{\mathbb{Z}[q,q^{-1}]} K_0(\gmod-C^i) \cong V_n .$$
where the action on the left hand side is induced from exact restriction and induction functors.
\end{prop}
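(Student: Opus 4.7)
The plan is to identify both sides as $\mC(q)$-vector spaces of dimension $n+1$, equip the left-hand side with an action of the generators $E$, $F$, $K^{\pm1}$ via exact functors between the categories $\gmod-C^i$, and then match highest weight vectors. The first step is to observe that $C^i = \HO^*(\op{Gr}(i,n);\mC)$ is a graded connected local Frobenius algebra concentrated in non-negative even degrees. Consequently $\gmod-C^i$ has a unique graded simple module $L^i$ up to grading shift, and $K_0(\gmod-C^i)$ is a free $\mZ[q,q^{-1}]$-module of rank one generated by $[L^i]$. Tensoring with $\mC(q)$ and summing over $i$ therefore yields a space of dimension $n+1$ with distinguished basis $\{[L^0],\ldots,[L^n]\}$, matching $\DIM_{\mC(q)} V_n$.

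To define the $\mathcal{U}_q(\SL_2)$-action, consider the two-step partial flag variety $\op{Fl}(i,i+1;n)$, whose cohomology $B_i := \HO^*(\op{Fl}(i,i+1;n))$ carries both a $(C^{i+1},C^i)$- and a $(C^i,C^{i+1})$-bimodule structure via pullback along the two natural projections to the Grassmannians. Tensoring with $B_i$, with appropriate grading shifts, defines exact induction and restriction functors between $\gmod-C^i$ and $\gmod-C^{i+1}$ in both directions, to categorify $F$ and $E$ respectively, while $K^{\pm1}$ is implemented on the $i$-th summand by multiplication by $q^{\pm(n-2i)}$. The quantum integers appearing in the $\mathcal{U}_q(\SL_2)$-action on $V_n$ are already visible geometrically: the projections $\op{Fl}(i,i+1;n)\to\op{Gr}(i+1,n)$ and $\op{Fl}(i,i+1;n)\to\op{Gr}(i,n)$ have fibers $\mP^i$ and $\mP^{n-i-1}$, so by Leray-Hirsch $B_i$ is free over $C^{i+1}$ and $C^i$ of graded rank $[i+1]_q$ and $[n-i]_q$ respectively.

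It then remains to verify that the induced operators on Grothendieck groups are exactly the matrix coefficients of $E$ and $F$ in the weight basis of $V_n$. This reduces to the graded-rank computation above: each functor sends a simple module to a module whose class in $K_0$ is the appropriate quantum-integer multiple of a simple, and with the chosen $K$-action the defining relation $EF-FE = (K-K^{-1})/(q-q^{-1})$ then holds on the nose. The map $[L^i]\mapsto v_i$ is thus $\mathcal{U}_q(\SL_2)$-equivariant, and it is an isomorphism because $V_n$ is the unique simple type-I module of dimension $n+1$ and $[L^0]$ is a highest weight vector annihilated by $E$. The main technical obstacle is the careful calibration of grading shifts in the definitions of the induction and restriction functors so that the relations hold exactly rather than up to an overall normalization; everything else is a standard but delicate bookkeeping with $q$-binomial identities.
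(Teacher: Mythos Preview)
The paper does not prove this proposition; it is simply quoted from \cite[Theorem 6.2]{FKS} with no argument given. Your sketch is essentially the original FKS argument: identify $K_0(\gmod-C^i)\cong\mZ[q,q^{-1}]$ via the unique graded simple, build $E$ and $F$ from the correspondence bimodules $\HO^*(\op{Fl}(i,i+1;n))$ with suitable shifts, and read off the quantum integers $[i+1]_q$, $[n-i]_q$ from the Leray--Hirsch decomposition over the two Grassmannian projections. So there is nothing in the present paper to compare against, and your outline is correct and aligned with the cited source.

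One small caution: to get an honest $\mathcal{U}_q(\SL_2)$-module structure rather than just an $\SL_2$-weight-module-with-operators, you must also verify the Serre-type relations among $E$, $F$, $K$ on the Grothendieck group, not only the commutator relation $[E,F]=(K-K^{-1})/(q-q^{-1})$; in this rank-one situation these are automatic once the commutator and the $K$-grading are in place, but it is worth saying so. The ``careful calibration of grading shifts'' you flag is indeed the only place where something can go wrong, and in FKS the shifts are fixed so that the functors are biadjoint up to shift and the induced operators are exactly those of the standard basis of $V_n$.
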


The category appearing in Proposition ~\ref{catofirr} is obtained from the category in Proposition ~\ref{catoftensors} by taking a quotient by a Serre subcategory. We have the corresponding quotient functor
\begin{equation}
\label{pin}
\widetilde{\pi}_n := \bigoplus_i^n \widetilde{\pi}_{i,n} \colon \quad\bigoplus_i^n {}^{\mathbb{Z}} \mathcal{O}_i (\mathfrak{gl}_n) \rightarrow \bigoplus_i^n \gmod-C^i
\end{equation}
and its (right exact) left adjoint
\begin{equation}
\label{iotan}
\widetilde{\iota}_n := \bigoplus_i^n \widetilde{\iota}_{i,n} \colon\quad\bigoplus_i^n \gmod-C^i \rightarrow \bigoplus_i^n {}^{\mathbb{Z}} \mathcal{O}_i (\mathfrak{gl}_n)
\end{equation}
The first functor is just a graded version of Soergel's combinatorial functor $\mathbb{V}$ introduced originally in \cite{Soperv}. Since the left adjoint $ \widetilde{\iota}_{n} $ is not exact, we will need its derived functor acting on a certain derived category. Note that we could directly, see \cite[Theorem 3.2]{MiSerre}, work with the one-sided unbounded derived category and consider the derived Verdier quotient. However, basically due to an Eilenberg-Schwindel argument, the Grothendieck groups behave not as we want them to, \cite{Miy}. Hence we will work with slightly smaller categories which whose definition is taken from ~\cite[Section 2.12]{BGS} (alternatively we could also work with the category $D^{\triangledown}(?)$ studied in \cite{AcharStroppel}).

Let $R = \oplus_{j \geq 0} R_j $ be a positively graded ring of finite dimension and $\gmod-R$ the category of finite dimensional graded right $R$-modules.  Let $ \mathcal{K}(\gmod-R) $ be the homotopy category of complexes in $\gmod-R$.  An object $ M \in \mathcal{K}(\gmod-R) $ is
a complex of graded $ R$-modules where the module sitting in the $i-th$ homological degree is $ M^i=\oplus_j M^i_j$.
Let $ \mathcal{K}^<(\gmod-R) $ be the full subcategory of $ \mathcal{K}(R) $ whose objects $ M $ satisfy $ M^i_j =0$ if $ i\gg 0 $ or  $i+j \ll 0$.
Let $ \mathcal{K}^>(\gmod-R) $ be the full subcategory of $ \mathcal{K}(R) $ whose objects $ M $ satisfy $ M^i_j =0$ if $ i\gg 0 $ or  $i+j \gg 0$.
Let $ \mathcal{D}^<(\gmod-R) $ and $ \mathcal{D}^>(\gmod-R) $ be the localizations of $ \mathcal{K}^<(\gmod-R) $ and $ \mathcal{K}^>(\gmod-R) $ respectively at quasi-isomorphisms.

We have the derived functors of the functors appearing in \eqref{pin} and \eqref{iotan}:
\begin{equation}
\label{derivedpin}
\mathcal{R}\widetilde{\pi}_n = \widetilde{\pi}_n := \bigoplus_i^n (\mathcal{R}\widetilde{\pi}_{i,n}=\widetilde{\pi}_{i,n}) \colon\quad \bigoplus_i^n \mathcal{D}^<({}^{\mathbb{Z}} \mathcal{O}_i (\mathfrak{gl}_n)) \rightarrow \bigoplus_i^n \mathcal{D}^<(\gmod-C^i)
\end{equation}
\begin{equation}
\label{derivediotan}
\mathcal{L} \widetilde{\iota}_n = \bigoplus_i^n \mathcal{L} \widetilde{\iota}_{i,n} \colon\quad \bigoplus_i^n \mathcal{D}^{<}(\gmod-C^i) \rightarrow \bigoplus_i^n \mathcal{D}^<({}^{\mathbb{Z}} \mathcal{O}_i (\mathfrak{gl}_n))
\end{equation}
which induce projection and inclusion on the Grothendieck groups, by \cite{FSS1}.

To get the categorified Jones-Wenzl projector, we define
\begin{equation}
\mathbb{P}_n = \oplus_{i=0}^n \mathbb{P}_{i,n} = \oplus_{i=0}^n (\mathcal{L} \widetilde{\iota}_{i,n}) \circ \widetilde{\pi}_{i,n}  \colon\quad \oplus_{i=0}^n D^{<}({}^{\mathbb{Z}} \mathcal{O}_i (\mathfrak{gl}_n)) \rightarrow
\oplus_{i=0}^n D^{<}({}^{\mathbb{Z}} \mathcal{O}_i (\mathfrak{gl}_n))
\end{equation}
the composition of the derived functors in \eqref{derivedpin} and \eqref{derivediotan}.
It was shown in ~\cite{FSS1} that this functor categorifies the Jones-Wenzl projector $ p_n $ and naturally commutes with functors categorifying the action of $ \mathcal{U}_q(\mathfrak{sl}_2) $.
Note that $ \mathcal{L}(\widetilde{\iota}_{i,n}) \circ \widetilde{\pi}_{i,n} \neq \mathcal{L}(\widetilde{\iota}_{i,n} \circ \widetilde{\pi}_{i,n}) $ which makes $ \mathbb{P}_n $ an interesting highly non-trivial functor.

The Koszul duality functor:
\begin{equation*}
\mathbb{D}_{i,n} \colon\quad D^{<}({}^{\mathbb{Z}} \mathcal{O}_i (\mathfrak{gl}_n)) \rightarrow D^{>}({}^{\mathbb{Z}} \mathcal{O}^i (\mathfrak{gl}_n))
\end{equation*}
a triangulated functor from the derived category of graded singular category $ \mathcal{O} $ into the derived category of graded parabolic category $ \mathcal{O}$ was constructed in \cite[Theorem 2.12.1]{BGS} (see also ~\cite{MOS}).
This is a covariant functor whose definition in the special case of $ n=2$ and $ i=1 $ we recall in Section ~\ref{D12}.
It maps simple and injective modules to projective and simple modules respectively ~\cite[Theorem 2.12.5]{BGS}.
A shift in the internal grading is mapped under $ \mathbb{D}_{i,n} $ to a diagonal shift in the internal and homological grading.
More precisely:
\begin{equation}
\label{internalshift}
\mathbb{D}_{i,n} (M\langle r \rangle) \cong (\mathbb{D}_{i,n} M) \langle -r \rangle [-r]
\end{equation}
\begin{equation}
\label{homshift}
\mathbb{D}_{i,n} (M[r]) \cong (\mathbb{D}_{i,n}M)[r]
\end{equation}
where $ (M \langle r \rangle)_j = M_{j-r} $ and $ (M[r])^j = M^{j+r}$ ~\cite[Theorem 2.12.5]{BGS}.

\section{Categorifications of the second Jones-Wenzl projector}
Consider now the case $ \mathfrak{gl}_2 $ from the general setup in Section ~\ref{lietheory}.  Since we only study the case when $ n=2 $ and $ i=1 $, we will omit these indices in what follows.
\subsection{The functor $ \mathbb{P}$}
\label{p12}

Let $ Q $ denote the quiver in ~\eqref{quiversl2} with vertices $1$ and $2$ and arrows $a$, $b$ as indicated.
A path (of length $ l>0$) is a sequence $ p = \alpha_1 \alpha_2 \cdots \alpha_l $ of arrows where the starting point of $ \alpha_i $ is the ending point of $ \alpha_{i+1} $ for $ i=1,\ldots,l-1$.
By $ \mathbb{C}Q $ we denote the path algebra of $ Q $, with basis the set of all paths with additionally $e(1)$ and $e(2)$ the trivial paths of length $ 0 $ beginning at $ 1 $ and $ 2 $ respectively, and product given by concatenation.  For example, $c = ab$ is a basis element of $\mathbb{C}Q$.
The path algebra is a graded algebra where the grading comes from the length of each path.
\begin{equation}
\label{quiversl2}
\begin{tikzpicture}
\filldraw[black](0,0) circle (1pt);
\filldraw[black](1,0) circle (1pt);
\draw (0,0) .. controls (.5,.5) .. (1,0)[->][thick];
\draw (0,0) .. controls (.5,-.5) .. (1,0)[<-][thick];
\draw (.5,.6) node{$a$};
\draw (.5,-.6) node{$b$};
\draw (0,-.3) node{$1$};
\draw (1,-.3) node{$2$};
\end{tikzpicture}
\end{equation}

Set $ B $ to be the algebra $ \mathbb{C}Q $ modulo the two-sided ideal generated by $ ba $.  By abuse of notation, we denote the image of an element $ p \in \mathbb{C}Q $ in the algebra $ B $ also by $ p$.
The algebra $ B $ inherits a grading from $ \mathbb{C}Q$ since the relation $ ba=0 $ is homogenous.  Let $ B_j $ denote degree $ j $ subspace of $ B $.  The degree zero part $ B_0 $ is a semi-simple algebra
spanned by $ e(1) $ and $ e(2)$.  The degree one subspace is spanned by $ a $ and $ b $.  The degree two subspace is spanned by $ c $ and $ B_j=0 $ for all $ j \geq 3$.  Let $ B_+ $ be the subspace of $ B $ whose homogenous elements are in positive degree.  The subspace $ B_+ $ is the radical of $ B $.

The graded category $ {}^{\mathbb{Z}} \mathcal{O}_{1}(\mathfrak{gl}_2) $ is equivalent to the category of finitely generated, graded, right modules over the path algebra of $ B $.
The projective modules $ P(1) = e(1)B $ and $ P(2)=e(2)B $ correspond to the dominant and anti-dominant projective modules respectively in category $ \mathcal{O}_1(\mathfrak{gl}_2)$.  The simple quotients of the latter two objects correspond to one-dimensional right modules over $ B$: $ L(1) = e(1) B/e(1)B_+ $ and $L(2) = e(2)B/e(2)B_+$.

Up to scalars, the only non-zero homogeneous maps between indecomposable projectives objects are:
\begin{eqnarray*}
&b \colon\quad P(2) \langle i \rangle \rightarrow P(1) \langle i-1 \rangle,\quad
a \colon\quad P(1) \langle i \rangle \rightarrow P(2) \langle i-1 \rangle,&\\
&c \colon\quad P(2) \langle i \rangle \rightarrow P(2) \langle i-2 \rangle,&\\
&e(1) \colon\quad P(1) \langle i \rangle \rightarrow P(1) \langle i \rangle,\quad
e(2) \colon\quad P(2) \langle i \rangle \rightarrow P(2) \langle i \rangle&
\end{eqnarray*}
where the maps are multiplication on the left by $ b$, $ a$ , $c$, $e(1)$, and $ e(2) $ respectively.

Let $ C = \End_{\gmod-B_{1,2}}(P(2)) \cong \mathbb{C}[x]/(x^2)$ where $ e(2) \mapsto 1 $ and $ c \mapsto x $.  This becomes an isomorphism of graded algebras if we put $ x $ in degree two and we may identify
$ C $ with $ H^*(\mathbb{P}^1{\mathbb{C}}) $.

Then we have an exact quotient functor $ \widetilde{\pi} \colon\quad \gmod-B \rightarrow \gmod-C $ given by:
$$ M \mapsto \Hom_{\gmod-B}(P(2),M) \langle -1 \rangle. $$
It is clear that $ \widetilde{\pi}(P(2)) = C \langle -1 \rangle $.  The map sending $ e(2) $ to $ a $ and every other basis element to zero gives that $ \widetilde{\pi} (P(1)) \cong \mathbb{C}=C/C_+ \cong \mathbb{C}$.
\label{kd}
There is also a right exact inclusion functor $ \widetilde{\iota} \colon\quad \gmod-C \rightarrow \gmod-B $ given by:
$$ M \mapsto M \otimes_{C} P(2) \langle 1 \rangle. $$
Clearly $ \widetilde{\iota} (C) \cong P(2) \langle 1 \rangle$.

Since the functor $ \widetilde{\iota}$ is not exact, we take derived functors and consider the composite
\begin{equation*}
\mathbb{P} =  \mathcal{L} \widetilde{\iota} \circ \widetilde{\pi} \colon\quad D^{<}(\gmod-B) \rightarrow D^{<}(\gmod-B).
 \end{equation*}

It follows that:
\begin{equation}
\label{ponP(2)}
\mathbb{P} (P(2)) = P(2)
\end{equation}
where the equality means canonical isomorphism, and
\begin{equation}
\label{ponP(1)}
\mathbb{P}(P(1)) \cong \cdots \rightarrow P(2) \langle 5 \rangle \rightarrow P(2) \langle 3 \rangle \rightarrow P(2) \langle 1 \rangle \rightarrow 0
\end{equation}
where the object $ P(2) \langle 1 \rangle $ is in homological degree zero and all the maps are $ c $.

\subsection{The Koszul duality functor $ \mathbb{D}$.}
\label{D12}
First we define the Koszul dual algebra $ B^! $ of the algebra $ B $ and then write down the functor $ \mathbb{D} $ following ~\cite{BGS}.
Let $ V = B_1 $ which is spanned by $ a $ and $ b $.  The algebra $ B $ is a quadratic algebra defined as the tensor algebra of $ V $ over $ B_0 $ modulo the
ideal generated by $ b \otimes a $.
Set $ V^* = \Hom_{B_0-\Mod}(V, B_0)$ which is spanned by $ a^*, b^*$ where
\begin{equation*}
a^*(a) = e(2),\quad
a^*(b) = 0,\quad
b^*(a) = 0,\quad
b^*(b) = e(1)
\end{equation*}
This space is a $ (B_0, B_0)$-bimodule where
\begin{equation*}
e(2)a^*=0,
\hspace{.2in}
e(1)a^*=a^*,
\hspace{.2in}
e(2)b^*=b^*,
\hspace{.2in}
e(1)b^*=0,
\end{equation*}
\begin{equation*}
a^*e(1)=0,
\hspace{.2in}
a^*e(2)=a^*,
\hspace{.2in}
b^*e(1)=b^*,
\hspace{.2in}
b^*e(2)=0.
\end{equation*}
Then $ B^! $ is defined to be the tensor algebra of $ V^* $ over $ B_0 $ modulo the ideal generated by $ a^* \otimes b^* $.

Consider the quiver $ Q^! $:
\begin{equation}
\label{quiversl2dual}
\begin{tikzpicture}
\filldraw[black](0,0) circle (1pt);
\filldraw[black](1,0) circle (1pt);
\draw (0,0) .. controls (.5,.5) .. (1,0)[->][thick];
\draw (0,0) .. controls (.5,-.5) .. (1,0)[<-][thick];
\draw (.5,.6) node{$b^*$};
\draw (.5,-.6) node{$a^*$};
\draw (0,-.3) node{$1$};
\draw (1,-.3) node{$2$};
\end{tikzpicture}
\end{equation}
It is clear that $ B^! $ is isomorphic to the quotient of the path algebra $ \mathbb{C}Q^! $ modulo the ideal is generated by $ a^* b^* $.
There is an obvious isomorphism $ \varphi\quad B \rightarrow B^! $ where
\begin{equation*}
e(1) \mapsto e(2),
\hspace{.2in}
e(2) \mapsto e(1),
\hspace{.2in}
a \mapsto a^*,
\hspace{.2in}
b \mapsto b^*,
\hspace{.2in}
ab \mapsto a^*b^*
\end{equation*}

An object $ M $ in $ \mathcal{D}^{<}(\gmod-B) $ can be viewed as a bigraded vector space $ \oplus_{r,s} M^r_s $ with a differential $ d \colon M^r_s \rightarrow M^{r+1}_s $ for all $ r $ and $ s $. Using the isomorphism $ \varphi $, following \cite{BGS}, we set
\begin{equation}
(\mathbb{D} M)^p_q = \bigoplus_{\substack{p=r+s\\q=l-s}} M^r_s \otimes_{B_0} B_l
\end{equation}
where the left action of $ B_0 $ is twisted by the isomorphism $ \varphi $.

Define
$ \mathbb{D} M = \bigoplus_{p,q} (\mathbb{D} M)^p_q $
to be a graded complex with differential $ \tilde{d} $ given by
\begin{equation}
\tilde{d}(m \otimes g) = dm \otimes g + (-1)^{r+s}(ma \otimes ag + mb \otimes bg)
\end{equation}
where $ m \in M^r_s $. Then we get a functor
\begin{equation*}
\mathbb{D} \colon\quad \mathcal{D}^{<} (\gmod-B) \rightarrow \mathcal{D}^{>} (\gmod-B).
\end{equation*}

Let $ I(2) $ be the injective hull of $ L(2)$.
We will need the following easy facts which can be found in \cite{BGS}.

\begin{lemma}There are isomorphisms of graded modules
\begin{equation}
\label{kdm}
\mathbb{D} L(1) \cong P(2),\quad
\mathbb{D} L(2) \cong P(1),\quad
\mathbb{D} I(2) \cong L(1),\quad
P(2) \cong I(2) \langle 2 \rangle,\quad
\end{equation}
\end{lemma}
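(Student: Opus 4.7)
These four assertions split naturally into two groups: the single intrinsic identification $P(2)\cong I(2)\langle 2\rangle$, and the three Koszul-duality statements.

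For $P(2)\cong I(2)\langle 2\rangle$ I would argue directly from the quiver and relations. Reading off $e(2)B$ one finds a three-dimensional module with basis $\{e(2),a,c\}$ in internal degrees $0,1,2$, and a simple socle spanned by the longest path $c$ (killed by $B_+$), which is isomorphic to $L(2)\langle 2\rangle$. Thus $P(2)$ is an essential extension of $L(2)\langle 2\rangle$, so the inclusion $L(2)\langle 2\rangle\hookrightarrow P(2)$ extends to an embedding $P(2)\hookrightarrow I(2)\langle 2\rangle$. A short dimension count, either from $I(2)=\Hom_{\mathbb{C}}(Be(2),\mathbb{C})$ with $Be(2)=\{e(2),b,c\}$ or from the reciprocity $[I(i):L(j)]=[P(j):L(i)]$ applied to the Loewy data of $P(1)$ and $P(2)$, gives $\dim I(2)=3=\dim P(2)$, forcing the embedding to be an isomorphism.

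For $\mathbb{D} L(1)\cong P(2)$ and $\mathbb{D} L(2)\cong P(1)$ the cleanest route is to cite \cite[Theorem 2.12.5]{BGS}, which asserts that $\mathbb{D}$ sends each indecomposable simple to the corresponding indecomposable projective over the Koszul dual algebra. Here the Koszul dual is identified with $B$ itself via the isomorphism $\varphi\colon B\to B^!$ set up in Section \ref{D12}; the decisive observation is that $\varphi$ exchanges the primitive idempotents $e(1)\leftrightarrow e(2)$, so after pulling back along $\varphi$ the vertex label gets permuted. This immediately gives $\mathbb{D} L(1)\cong P(2)$ and $\mathbb{D} L(2)\cong P(1)$. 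As a sanity check I would specialize the formula $(\mathbb{D} M)^p_q=\bigoplus_{p=r+s,\,q=l-s} M^r_s\otimes_{B_0} B_l$ to $M=L(1)$: only the $r=s=0$ summand contributes, the differential $\tilde d$ vanishes because $L(1)\cdot B_+=0$, and the twisted left $B_0$-action along $\varphi$ turns $\mathbb{C}\otimes_{B_0} B_l$ into $e(2)B_l$, so the output reassembles to $P(2)=e(2)B$ in homological degree $0$ with its natural internal grading.

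The remaining isomorphism $\mathbb{D} I(2)\cong L(1)$ is handled by the second half of \cite[Theorem 2.12.5]{BGS}, which says $\mathbb{D}$ sends indecomposable injectives to simples; the same vertex swap induced by $\varphi$ turns the $B^!$-simple at vertex $2$ into the $B$-simple at vertex $1$. The only technical point across all four items is keeping the $\varphi$-twist of vertex labels consistent, so that the statement of \cite[Theorem 2.12.5]{BGS} (naturally phrased over $B^!$) translates correctly into objects of $\gmod-B$; once that bookkeeping is fixed, each verification reduces to the short arguments above, with the real content sitting in the injectivity of $P(2)$.
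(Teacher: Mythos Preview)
Your proposal is correct and aligns with the paper's approach: the paper simply declares these as ``easy facts which can be found in \cite{BGS}'' and then illustrates only $\mathbb{D} L(1)\cong P(2)$ by the same explicit computation you give as your sanity check (observing that $(\mathbb{D} L(1))^0_q = L(1)\otimes_{B_0} B_q$ is spanned by $e(1)\otimes e(2)$, $e(1)\otimes a$, $e(1)\otimes ab$). Your treatment is somewhat more thorough, in that you also supply a direct socle-and-dimension argument for $P(2)\cong I(2)\langle 2\rangle$, whereas the paper leaves this implicit.
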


{\bf Example: $ \mathbb{D} L(1) \cong P(2) $.}
Since $ L(1)^r_s = 0 $ unless $ r=s=0$, if $ p \neq 0 $, then $ (\mathbb{D} L(1))^p_q = 0 $.
By definition, $ (\mathbb{D} L(1))^0_q = L(1) \otimes_{B_0} B_q $.
Clearly, $ L(1) \otimes_{B_0} B_0 $ is spanned by $ e(1) \otimes e(2)$,
$ L(1) \otimes_{B_0} B_1 $ is spanned by $ e(1) \otimes a$, and
$ L(1) \otimes_{B_0} B_2 $ is spanned by $ e(1) \otimes ab$.
This is clearly isomorphic to $ P(2)$.

\subsection{The functor $ \mathbb{D} \circ \mathbb{P}$}

First we take the Koszul dual of the complexes obtained in Section ~\ref{p12}.
Using \eqref{internalshift}, \eqref{ponP(2)} and \eqref{kdm} we get:
\begin{equation}
\mathbb{D} \circ \mathbb{P}(P(2)) = \mathbb{D}(P(2)) \cong L(1)\langle -2 \rangle [-2].
\end{equation}
Note that $ \mathbb{P}(P(1)) $ is an object of $ \mathcal{D}^{<}(\gmod-B) $ by \eqref{ponP(1)}.  Then we get:
\begin{equation}
\label{KDBGP(10)}
\mathbb{D}_{1,2} \circ \widetilde{p}_{1,2}(P(1)) \cong ( \cdots \rightarrow L(1) \langle -4 \rangle [-4] \rightarrow L(1) \langle -2 \rangle [-2] \rightarrow L(1) \rightarrow 0) \langle -3 \rangle [-3]
\end{equation}
where the rightmost $ L(1) $ sits in homological degree zero.

It is easy to see that the simple object $L(1)$ is quasi-isomorphic to its minimal projective resolution:
\begin{equation}
\xymatrix{
0 \ar[r] & P(1) \langle 2\rangle \ar[r]^{a} & P(2)\langle 1\rangle \ar[r]^{b} & P(1) \rightarrow 0.
}
\end{equation}

Then \eqref{KDBGP(10)} is the total complex shifted by $ \langle -3 \rangle [-3] $ of the following complex of complexes where the object $ P(1) \langle 2 \rangle $ in the upper-right corner sitting in homological degree $-2$:

\begin{equation}
\label{complexofcomplexes}
\xymatrix{
& & & 0 \ar[d] \\
& & & P(1) \langle 2 \rangle \ar[d]\\
& & 0\ar[d] & P(2) \langle 1 \rangle \ar[d] \\
&& P(1) \ar[d] \ar[r] & P(1) \ar[d] \\
& 0 \ar[d] & P(2) \langle -1 \rangle \ar[d] &0 \\
& P(1) \langle -2 \rangle \ar[d] \ar[r] & P(1) \langle -2 \rangle \ar[d] & \\
& P(2) \langle -3 \rangle\ar[d] & 0 &  \\
\cdots \ar[r] & P(1) \langle -4 \rangle \ar[d] & & \\
& 0 & &
}
\end{equation}

The total complex of \eqref{complexofcomplexes} is the middle complex in \eqref{sl2comp} where the object $ P(1) \langle 2 \rangle $ in the bottom row sits in homological degree $-2$.   The complex is an object of
$ D^{>}(\gmod-B)$.
It is easy to check that this middle complex decomposes as a direct sum of the complexes displayed in the left and right columns in \eqref{sl2comp}.

\begin{figure}
\begin{equation}
\label{sl2comp}
\xymatrix{
\cdots & \cdots  & \cdots \\
P(1) \langle -4 \rangle \oplus P(1) \langle -6 \rangle \ar[u]^{A_4}
\ar[r]^>{\!\!\!\!\!\!\!\!\!\!\!\!J_4}   & P(1) \langle -4 \rangle \oplus P(2) \langle -5 \rangle \oplus P(1) \langle -6 \rangle \ar[u]^{B_4} \ar[r]^>{\!\!\!\!\!\!\!\!\!\!\!\!K_4} \ar@<1ex>[l]^>{ L_4 \!\!\!\!\!\!\!\!\!\!\!\!\!\!\!\!\!\!\!\!\!\!\!\!}& P(2) \langle -5 \rangle\ar[u]^{C_4}\ar@<1ex>[l]^>{M_4 \!\!\!\!\!\!\!\!\!\!\!\!\!\!\!\!\!\!\!\!\!\!\!\!  } \\
P(1) \langle -2 \rangle \oplus P(1) \langle -4 \rangle \ar[u]^{A_3}  \ar[r]^{\!\!\!\!\!\!\!\!\!\!\!\!J_3} & P(1) \langle -2 \rangle \oplus P(2) \langle -3 \rangle \oplus P(1) \langle -4 \rangle \ar[u]^{B_3} \ar[r]^>{\!\!\!\!\!\!\!\!\!\!\!\!K_3} \ar@<1ex>[l]^>{L_3 \!\!\!\!\!\!\!\!\!\!\!\!\!\!\!\!\!\!\!\!\!\!\!\!}& P(2) \langle -3 \rangle\ar[u]^{C_3}\ar@<1ex>[l]^>{M_3 \!\!\!\!\!\!\!\!\!\!\!\!\!\!\!\!\!\!\!\!\!\!\!\! } \\
P(1) \oplus P(1) \langle -2 \rangle \ar[u]^{A_2}  \ar[r]^>{\!\!\!\!\!\!\!\!\!\!\!\!\!\!\!\!\!\!\!\!\!\!\!\!\!J_2} & P(1) \oplus P(2) \langle -1 \rangle \oplus P(1) \langle -2 \rangle \ar[u]^{B_2} \ar[r]^>{\!\!\!\!\!\!\!\!\!\!\!\!K_2} \ar@<1ex>[l]^>{L_2 \!\!\!\!\!\!\!\!\!\!\!\!\!\!\!\!\!\!\!\!\!\!\!\!\!\!\!\!\!\!\!\!\!\!\!\!\!\!\!\!\!\!\!\!\!\!\!\!}& P(2) \langle -1 \rangle\ar[u]^{C_2} \ar@<1ex>[l]^>{M_2 \!\!\!\!\!\!\!\!\!\!\!\!\!\!\!\!\!\!\!\!\!\!\!\!\!\!\!\!\!\!\!\!\!\!\!\!\!\!\!\!\!\!\!\!\! }\\
P(1) \ar[u]^{A_1} \ar[r]^>{\!\!\!\!\!\!\!\!\!\!\!\!\!\!\!\!\!\!\!\!\!\!\!\!\!\!\!\!\!\!\!\!\!\!\!\!\!\!\!\!\!\!\!\!\!\!\!\!\!\!\!\!\!\!\!\!\!\!\!\!\!\!\!\!\!\!\!\!\!\!\!\!\!\!\!J_1} & P(2) \langle 1 \rangle \oplus P(1) \ar[u]^{B_1}  \ar[r]^>{\!\!\!\!\!\!\!\!\!\!\!\!\!\!\!\!\!\!K_1} \ar@<1ex>[l]^>{L_1 \!\!\!\!\!\!\!\!\!\!\!\!\!\!\!\!\!\!\!\!\!\!\!\!\!\!\!\!\!\!\!\!\!\!\!\!\!\!\!\!\!\!\!\!\!\!\!\!\!\!\!\!\!\!\!\!\!\!\!\!\!\!\!\!\!\!\!\!\!\!\!\!\!\!\!\!\!\!\!\!\!\!\!\!\!\!\!\!\!\!\!\!\!\!\!\!} & P(2) \langle 1 \rangle \ar[u]^{C_1} \ar@<1ex>[l]^>{M_1 \!\!\!\!\!\!\!\!\!\!\!\!\!\!\!\!\!\!\!\!\!\!\!\!\!\!\!\!\!\!\!\!\!\!\!\!\!\!\!\!\!\!\!\!\!\!\!\!\!\!\!\!\!\!\!\!\!\!\!\!\!\!\!\!\!\!\!\!\!\!\!\!\!\!\!\!\!\!\!\!\!\!\!\!\!\!\!\!\!\!\!\!\!\!}\\
0 \ar[u]^{A_0} \ar[r]^>{\!\!\!\!\!\!\!\!\!\!\!\!\!\!\!\!\!\!\!\!\!\!\!\!\!\!\!\!\!\!\!\!\!\!\!\!\!\!\!\!\!\!\!\!\!\!\!\!\!\!\!\!\!\!\!\!\!\!\!\!\!\!\!\!\!\!\!\!\!\!\!\!\!\!\!\!\!\!\!\!\!\!\!\!\!\!\!\!\!\!\!\!\!\!\!\!\!\!\!\!\!\!\!\!\!\!\!\!\!J_0} & P(1) \langle 2 \rangle \ar[u]^{B_0} \ar[r]^>{\!\!\!\!\!\!\!\!\!\!\!\!\!\!\!\!K_0} \ar@<1ex>[l]^>{L_0 \!\!\!\!\!\!\!\!\!\!\!\!\!\!\!\!\!\!\!\!\!\!\!\!\!\!\!\!\!\!\!\!\!\!\!\!\!\!\!\!\!\!\!\!\!\!\!\!\!\!\!\!\!\!\!\!\!\!\!\!\!\!\!\!\!\!\!\!\!\!\!\!\!\!\!\!\!\!\!\!\!\!\!\!\!\!\!\!\!\!\!\!\!\!\!\!\!\!\!\!\!\!\!\!\!} & P(1) \langle 2 \rangle \ar[u]^{C_0} \ar@<1ex>[l]^>{M_0 \!\!\!\!\!\!\!\!\!\!\!\!\!\!\!\!\!\!\!\!\!\!\!\!\!\!\!\!\!\!\!\!\!\!\!\!\!\!\!\!\!\!\!\!\!\!\!\!\!\!\!\!\!\!\!\!\!\!\!\!\!\!\!\!\!\!\!\!\!\!\!\!\!\!\!\!\!\!\!\!\!\!\!\!\!\!\!\!\!\!\!\!\!\!\!\!\!\!\!\!\!\!\!\!\!\!\!\!\!\!\!\!\!\!\!\!\!\!\!\!\!\!\!\!\!\!} \\
& 0 \ar[u] & 0 \ar[u]
}
\end{equation}
\end{figure}

where for $ n \geq 2$,
\begin{equation*}
A_0 = \begin{pmatrix}
0
\end{pmatrix}
\hspace{.2in}
A_{1} = \begin{pmatrix}
1 \\
0
\end{pmatrix}
\hspace{.2in}
A_{n}= \begin{pmatrix}
0 & 1\\
0 & 0
\end{pmatrix}
\end{equation*}

\begin{equation*}
B_0 = \begin{pmatrix}
a \\
0
\end{pmatrix}
\hspace{.2in}
B_{1} = \begin{pmatrix}
b & 1 \\
0 & -a \\
0 & 0
\end{pmatrix}
\hspace{.2in}
B_{n}= \begin{pmatrix}
0 & b & 1\\
0 & 0 & -a\\
0 & 0 & 0
\end{pmatrix}
\end{equation*}

\begin{equation*}
C_0 = \begin{pmatrix}
a
\end{pmatrix}
\hspace{.2in}
C_1 = \begin{pmatrix}
c
\end{pmatrix}
\hspace{.2in}
C_n = \begin{pmatrix}
c
\end{pmatrix}
\end{equation*}

\begin{equation*}
J_0 = \begin{pmatrix}
0
\end{pmatrix}
\hspace{.2in}
J_1 = \begin{pmatrix}
0 \\
1
\end{pmatrix}
\hspace{.2in}
J_n = \begin{pmatrix}
1 & 0\\
-a & 0\\
0 & 1
\end{pmatrix}
\end{equation*}

\begin{equation*}
K_0 = \begin{pmatrix}
1
\end{pmatrix}
\hspace{.2in}
K_1 = \begin{pmatrix}
1 & 0\\
\end{pmatrix}
\hspace{.2in}
K_n = \begin{pmatrix}
a & 1 & 0
\end{pmatrix}
\end{equation*}

\begin{equation*}
L_0 = \begin{pmatrix}
0
\end{pmatrix}
\hspace{.2in}
L_1 = \begin{pmatrix}
b & 1
\end{pmatrix}
\hspace{.2in}
L_n = \begin{pmatrix}
1 & 0 & 0\\
0 & b & 1
\end{pmatrix}
\end{equation*}

\begin{equation*}
M_0 = \begin{pmatrix}
1
\end{pmatrix}
\hspace{.2in}
M_1 = \begin{pmatrix}
1 \\
-b
\end{pmatrix}
\hspace{.2in}
M_n = \begin{pmatrix}
0\\
1\\
-b
\end{pmatrix}
\end{equation*}

Hence we have determined $ \mathbb{D} \circ \mathbb{P} $ on projective objects in $ \gmod-B $ and will now determine it on maps between projective objects.



\begin{lemma}
\begin{fleqn}
\label{Dpc}
\begin{align*}
\mathbb{D}_{1,2} \circ \widetilde{p}_{1,2} (c) \colon\quad
\end{align*}
\end{fleqn}
\begin{equation*}
\xymatrix{
  & & 0 \ar[r] & P(1) \langle -2 \rangle \ar[r]^{a} \ar[d]^{1}& P(2) \langle -3 \rangle \ar[r]^{b} & P(1) \langle -4 \rangle \ar[r] & 0\\
 0 \ar[r] & P(1) \ar[r]^{a} & P(2) \langle -1 \rangle \ar[r]^{b} & P(1) \langle -2 \rangle \ar[r] & 0
}
\end{equation*}

\begin{fleqn}
\label{Dpa}
\begin{align*}
\mathbb{D}_{1,2} \circ \widetilde{p}_{1,2} (a) \colon\quad
\end{align*}
\end{fleqn}
\begin{equation*}
\xymatrix{
& & 0 \ar[r] & P(1) \langle -2 \rangle \ar[r]^{a} \ar[d]^{1} & P(2) \langle -3 \rangle \ar[r]^{c} & P(2) \langle -5 \rangle \ar[r]^{c} & \cdots \\
0 \ar[r] & P(1) \ar[r]^{a} & P(2) \langle -1 \rangle \ar[r]^{b} & P(1) \langle -2 \rangle \ar[r] & 0
}
\end{equation*}

\begin{fleqn}
\label{Dpb}
\begin{align*}
\mathbb{D}_{1,2} \circ \widetilde{p}_{1,2} (b) \colon\quad
\end{align*}
\end{fleqn}
\begin{equation*}
\xymatrix{
0 \ar[r] & P(1) \langle -1 \rangle \ar[r]^{a} \ar[d]^{1} & P(2) \langle -2 \rangle \ar[r]^{b} \ar[d]^{1} & P(1) \langle -3 \rangle \ar[r] \ar[d]^{a} & 0 & \\
0 \ar[r] & P(1) \langle -1 \rangle \ar[r]^{a} & P(2) \langle -2 \rangle \ar[r]^{c} & P(2) \langle -4 \rangle \ar[r]^{c} & P(2) \langle -6 \rangle \ar[r]^{c} & \cdots
}
\end{equation*}
\end{lemma}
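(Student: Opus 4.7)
The plan is to apply $\mathbb{P}$ and then $\mathbb{D}$ to each of the three basic maps $c$, $a$, $b$ separately, lifting at each stage to explicit chain-level representatives, and then to reduce the resulting chain maps using the decomposition already carried out in \eqref{sl2comp}.

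First, I would compute $\mathbb{P}(c)$, $\mathbb{P}(a)$, $\mathbb{P}(b)$. Since $\mathbb{P}(P(2)) = P(2)$ by \eqref{ponP(2)}, the morphism $\mathbb{P}(c)$ is just left multiplication by $c$. For $a$ and $b$ with source or target $P(1)$ one uses that $\mathbb{P}(P(1))$ is the infinite complex \eqref{ponP(1)}: compute $\widetilde{\pi}(a)$ and $\widetilde{\pi}(b)$ directly from $\widetilde{\pi} = \Hom_{\gmod-B}(P(2),-)\langle -1\rangle$, then lift $\mathcal{L}\widetilde{\iota}$ by resolving $\mC = C/C_+$ by free $C$-modules. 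This produces explicit chain maps between the complex \eqref{ponP(1)} and (a shift of) $P(2)$.

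Second, I would apply $\mathbb{D}$ termwise. By \eqref{kdm}, \eqref{internalshift}, and the identification $P(2)\cong I(2)\langle 2\rangle$, each $P(2)\langle k\rangle$ is sent to $L(1)\langle -2-k\rangle[-2-k]$, so the complexes of shifted $P(2)$'s become complexes of shifted $L(1)$'s and the chain maps transport by functoriality; care is needed with the sign $(-1)^{r+s}$ and the tensor factor $B_l$ coming from the twisted differential $\widetilde{d}$. I would then replace each occurring $L(1)\langle s\rangle$ by the minimal projective resolution $0 \to P(1)\langle 2+s\rangle \to P(2)\langle 1+s\rangle \to P(1)\langle s\rangle \to 0$, totalize the resulting double complex, and decompose as in \eqref{sl2comp}.

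The chain maps displayed in the lemma should then appear as the surviving components of $\mathbb{D}\circ\mathbb{P}(c)$, $\mathbb{D}\circ\mathbb{P}(a)$, and $\mathbb{D}\circ\mathbb{P}(b)$ after stripping off the contractible direct summands identified in the decomposition of \eqref{sl2comp}. Commutativity of all squares reduces to the quiver relation $ba=0$ and the definition $c=ab$, so this last verification is routine once the right representatives are in hand.

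The main obstacle is the combinatorial bookkeeping: tracking internal and homological shifts through the Koszul duality formula $(\mathbb{D}M)^p_q = \bigoplus_{p=r+s,\, q=l-s} M^r_s \otimes_{B_0} B_l$ with its twisted differential, and matching the residual components of the totalized chain maps against the matrices $A_n, B_n, C_n, J_n, K_n, L_n, M_n$ from the previous subsection. In particular, checking that only the displayed entries ($1$; $1$; and $1$, $1$, $a$ respectively) survive after the decomposition requires consistently identifying the contractible summands on both source and target so that the projection onto the surviving summand is compatible with the chain map.
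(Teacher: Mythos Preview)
The paper does not supply a proof of this lemma at all: it is stated immediately after the sentence ``Hence we have determined $\mathbb{D}\circ\mathbb{P}$ on projective objects in $\gmod\text{-}B$ and will now determine it on maps between projective objects,'' and the next line begins the new subsection on $\mathbb{CK}$. The intended argument is therefore exactly the one you describe: compute $\mathbb{P}$ on $a,b,c$ using \eqref{ponP(2)}, \eqref{ponP(1)}, apply $\mathbb{D}$ via \eqref{kdm} and \eqref{internalshift} to land in complexes of shifted $L(1)$'s, replace each $L(1)$ by its minimal projective resolution, totalize, and project onto the noncontractible summand using the explicit matrices $A_n,\dots,M_n$ of \eqref{sl2comp}. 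Your outline matches the paper's implicit strategy and correctly identifies the only nontrivial point, namely the bookkeeping of shifts and the compatibility of the projections $K_n,L_n$ with the induced chain maps.
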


\subsection{The Cooper-Krushkal projector $ \mathbb{CK}$}
Let $ \theta $ be the $ (B, B)- $ bimodule $ Be(2) \otimes e(2) B \langle -1 \rangle$.
By abuse of notation, if $ M $ is an object of $ \gmod-B $, then $ \theta M $ is an object of $ \gmod-B $ given by $ M \otimes_{B} \theta $ giving rise to an exact functor.
\begin{remark}{\rm
This bimodule was studied in \cite{KS} as a categorified generator of the two-dimensional Temperley-Lieb algebra $TL_2 $ and Lie theoretically in \cite{Strgrad} as a translation functor where the natural transformations were also considered.
}
\end{remark}

There are bimodule homomorphisms:
\begin{eqnarray*}
\alpha \colon&& B \langle i \rangle \rightarrow \theta \langle i-1 \rangle \hspace{.2in} e(2) \mapsto c \otimes e(2) + e(2) \otimes c \hspace{.2in} e(1) \mapsto a \otimes b\\
\beta \colon&& \theta \langle i \rangle \rightarrow \theta \langle i-2 \rangle \hspace{.2in} e(2) \otimes e(2) \mapsto c \otimes e(2) - e(2) \otimes c\\
\gamma \colon&& \theta \langle i \rangle \rightarrow \theta \langle i-2 \rangle \hspace{.2in} e(2) \otimes e(2) \mapsto c \otimes e(2) + e(2) \otimes c
\end{eqnarray*}

\begin{prop}
\label{cksl2}
There is a complex of bimodules as follows $ \mathbb{CK}:= $
\begin{displaymath}
\xymatrix{0 \ar[r] & B_{1,2} \ar[r]^{\alpha} & \theta_1 \langle -1 \rangle \ar[r]^{\beta} & \theta_1 \langle -3 \rangle \ar[r]^{\gamma} & \theta_1 \langle -5 \rangle \ar[r]^{\beta} & \theta_1 \langle -7 \rangle \ar[r]^{\gamma} & \cdots}
\end{displaymath}
\end{prop}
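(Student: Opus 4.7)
The plan is to prove the proposition in two stages: first verify that the three maps $\alpha, \beta, \gamma$ are well-defined $(B,B)$-bimodule homomorphisms of the stated internal degree, then verify that each of the three consecutive compositions appearing in the sequence, namely $\beta\alpha$, $\gamma\beta$, and $\beta\gamma$, vanishes.

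For the well-definedness, a bimodule map out of $B$ is determined by the image of $1 = e(1) + e(2)$, and I would check two things: that $\alpha(e(i))$ lies in $e(i)\,\theta\langle -1\rangle\, e(i)$ for $i=1,2$, and that the only defining relation $ba = 0$ of $B$ is respected, that is $b\cdot\alpha(1)\cdot a = 0$. The first is immediate from the idempotent relations $e(2)c = ce(2) = c$, $e(2)a = ae(1) = a$, $e(1)b = be(2) = b$; the second reduces to the elementary identities $bc = 0$, $ca = 0$, together with $a^2 = b^2 = 0$. For $\beta$ and $\gamma$, the bimodule $\theta$ is cyclically generated by $e(2) \otimes e(2) \in e(2)\,\theta\, e(2)$, so each is determined by the image of this generator; one checks that $c \otimes e(2) \pm e(2) \otimes c$ indeed lies in $e(2)\,\theta\langle -2\rangle\, e(2)$.

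For the complex condition, everything reduces to the three nilpotency identities
\[
c^2 = 0, \qquad ca = 0, \qquad bc = 0,
\]
all of which follow at once from $c = ab$ and the relation $ba = 0$. To illustrate, $\beta\alpha(e(2))$ is computed by writing $c\otimes e(2) = c \cdot (e(2)\otimes e(2))$ and $e(2)\otimes c = (e(2)\otimes e(2))\cdot c$ and using that $\beta$ is a bimodule map:
\[
\beta(\alpha(e(2))) = c\bigl(c\otimes e(2) - e(2)\otimes c\bigr) + \bigl(c\otimes e(2) - e(2)\otimes c\bigr)c = \bigl(c^2\otimes e(2) - c\otimes c\bigr) + \bigl(c\otimes c - e(2)\otimes c^2\bigr) = 0.
\]
For $\beta\alpha(e(1)) = \beta(a\otimes b)$ one pulls $a$ and $b$ through $\beta$ as left and right actions; the two resulting terms each contain a factor of $ac$, $ca$, $bc$, or $cb$ and so vanish. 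The checks $\gamma\beta = 0$ and $\beta\gamma = 0$ are completely analogous: each yields either a factor of $c^2$, or a pair of equal terms with opposite signs arising from the $\pm$ in the formulas for $\beta$ and $\gamma$.

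I do not foresee any real obstacle: once the nilpotency identities above are in hand, the rest is bookkeeping. The one conceptual point requiring care is the interpretation of the tensor product defining $\theta$: for $\beta$ to be nontrivial the element $c\otimes e(2) - e(2)\otimes c$ must be nonzero, so the tensor is taken over $B_0 = \mathbb{C}e(1)\oplus\mathbb{C}e(2)$ rather than over $C = e(2)Be(2)$, and then $\theta$ is the free $(B,B)$-bimodule on $e(2)\otimes e(2) \in e(2)\,\theta\, e(2)$, so no hidden relations can obstruct extending $\beta$ and $\gamma$.
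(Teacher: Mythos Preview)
Your proposal is correct and follows exactly the paper's approach: the paper's entire proof is the single sentence ``It is routine to check that the composition of consecutive maps is zero,'' and you have simply written out that routine check together with the (equally routine) well-definedness of the maps. One small caveat: the formula $\alpha(e(1))=a\otimes b$ in the paper seems to be a typo for $b\otimes a$ (since $a\notin Be(2)$ and $b\notin e(2)B$), so when you pull the outer factors through $\beta$ the vanishing factors you actually get are $bc$ and $ca$ --- still zero for the reasons you state.
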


\begin{proof}
It is routine to check that the composition of consecutive maps is zero.
\end{proof}

Let $ X $ be a complex in $ \mathcal{D}^>(\gmod-B) $.  Then $ X \otimes_B \mathbb{CK} $ is a bicomplex of $ B $-modules.  This gives rise to a functor
\begin{equation*}
\mathbb{CK} \colon\quad \mathcal{D}^>(\gmod-B) \rightarrow \mathcal{D}^>(\gmod-B)
\end{equation*}
where the complex $ X $ is mapped to be the total complex of $ X \otimes_B \mathbb{CK} $.

The functor $\mathbb{CK}$ is a Cooper-Krushkal universal projector of width two up to a renormalization in the grading: our internal grading is opposite to that of \cite{CoKr}.

\begin{lemma} $\Theta$ acts on projectives as follows
\label{p1onprojectives}
\begin{enumerate}
\item $ \theta P(2) \cong P(2) \langle -1 \rangle \oplus P(2) \langle 1 \rangle $
\item $ \theta P(1) \cong P(2)$.
\end{enumerate}
\end{lemma}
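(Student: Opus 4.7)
The plan is to unpack the bimodule definition of $\theta = Be(2)\otimes_{\mC} e(2)B\langle -1\rangle$ and compute the tensor product directly. Writing $P(i)=e(i)B$, the standard identification gives
\[
P(i)\otimes_B\theta \;=\; e(i)B\otimes_B Be(2)\otimes_{\mC}e(2)B\langle -1\rangle \;\cong\; e(i)Be(2)\otimes_{\mC}e(2)B\langle -1\rangle,
\]
where the right $B$-module structure comes from the factor $e(2)B\langle -1\rangle$. So the question reduces to identifying the finite-dimensional graded vector space $e(i)Be(2)$ and then tensoring with $P(2)\langle -1\rangle$.

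First I would read these truncations off from the quiver presentation of $B$ given in Section~\ref{p12}. The space $e(2)Be(2)$ has a basis consisting of paths from $2$ to $2$ in $B$, namely $e(2)$ in degree $0$ and $c=ab$ in degree $2$ (the path $ba$ vanishes by the relation). Hence $e(2)Be(2)\cong\mC\oplus\mC\langle 2\rangle$ as graded vector spaces. Similarly, $e(1)Be(2)$ is one-dimensional, spanned by the arrow $a$ in degree $1$, so $e(1)Be(2)\cong\mC\langle 1\rangle$.

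Substituting into the formula above, a one-dimensional summand sitting in degree $d$ of $e(i)Be(2)$ contributes a copy of $P(2)\langle d-1\rangle$. For $i=2$ this yields $\theta P(2)\cong P(2)\langle -1\rangle\oplus P(2)\langle 1\rangle$ and for $i=1$ it yields $\theta P(1)\cong P(2)$, as claimed. The entire argument is a bookkeeping exercise; the only thing to watch is combining the intrinsic degree of a basis vector of $B$ with the external shift $\langle -1\rangle$ built into $\theta$, and there is no homological input because $\theta$ is presented as an explicit projective bimodule so $-\otimes_B\theta$ is already exact on modules and factors through the vector-space computation above.
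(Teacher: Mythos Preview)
Your approach is correct and is essentially the same as the paper's: both compute $\theta P(i)=e(i)B\otimes_B Be(2)\otimes_{\mC} e(2)B\langle -1\rangle\cong e(i)Be(2)\otimes_{\mC} P(2)\langle -1\rangle$ and then read off the graded vector space $e(i)Be(2)$ from the quiver description. One small slip: with the paper's conventions $a\in e(2)Be(1)$ and $b\in e(1)Be(2)$, so $e(1)Be(2)$ is spanned by $b$, not $a$; since $b$ also sits in degree $1$ this does not affect your shift bookkeeping or the conclusion.
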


\begin{proof}
Under the equality:
\begin{equation*}
\theta P(2) = e(2) B \otimes_{B} B e(2) \otimes e(2) B \langle -1 \rangle,
\end{equation*}
the first isomorphism maps $ 1 \otimes 1 \otimes y $ to $ y $ in the first coordinate and $ c \otimes 1 \otimes y $ to $ y $ in the second coordinate. For the second part of the lemma, first note that
\begin{equation*}
\theta P(1) = e(1) B \otimes_{B} B e(2) \otimes e(2) B \langle -1 \rangle.
\end{equation*}
The desired isomorphism maps $ a \otimes y $ to $ y$.
\end{proof}

\begin{prop}
\label{CKonprojectives}
There are isomorphisms in $ \mathcal{D}^>(\gmod-B)$:
\begin{enumerate}
\item $ \mathbb{CK}(P(2)) \cong 0 $
\item $ \mathbb{CK}(P(1)) \cong $
\begin{displaymath}
\xymatrix{0 \ar[r] & P(1)\ar[r]^{b} & P(2) \langle -1 \rangle \ar[r]^{-c} & P(2) \langle -3 \rangle \ar[r]^{c} & P(2) \langle -5 \rangle \ar[r]^{-c} & \cdots}
\end{displaymath}
\end{enumerate}
\end{prop}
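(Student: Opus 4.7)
The plan is to compute $\mathbb{CK}(P(i)) = P(i)\otimes_B \mathbb{CK}$ term by term. Because $P(i)$ sits in a single homological degree, this bicomplex collapses to a single complex obtained by applying $P(i)\otimes_B(-)$ to each term of $\mathbb{CK}$ with the induced differentials. The identification of the individual terms is immediate from Lemma~\ref{p1onprojectives}: for (1) each $P(2)\otimes_B \theta\langle -2k-1\rangle$ splits as $P(2)\langle -2k-2\rangle \oplus P(2)\langle -2k\rangle$, while for (2) each $P(1)\otimes_B \theta\langle -2k-1\rangle$ is a single copy $P(2)\langle -2k-1\rangle$; in both cases the degree-zero slot is $P(i)\otimes_B B \cong P(i)$.

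The second step is to determine the induced differentials by tracing the bimodule formulas for $\alpha,\beta,\gamma$ through the explicit isomorphisms from the proof of Lemma~\ref{p1onprojectives}. For (2), the identification $\theta P(1)\cong P(2)$ coming from $a\otimes y \mapsto y$ is one-dimensional on each homogeneous piece, so each of $\alpha,\beta,\gamma$ must become a single map between copies of $P(1)$ or $P(2)$. A direct substitution sends $\alpha$ to right multiplication by $b$ (the contribution from $e(1)\mapsto b\otimes a$ survives after identification, while the $c\otimes e(2)+e(2)\otimes c$ contribution is killed on $e(1)B$), and sends $\beta,\gamma$ respectively to $\mp c$ (the two summands of $c\otimes e(2)\mp e(2)\otimes c$ both pass through the single branch $a\otimes y \mapsto y$, and the relative sign survives). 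This recovers the pattern $b, -c, c, -c, c, \ldots$ displayed in the statement.

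For (1), the same bookkeeping yields $2{\times}2$ matrix differentials between the paired summands of $P(2)$. The key observation is that in each such matrix an entry is, up to a unit, the identity: it arises from the ``$e(2)\otimes c$''-summand of $\alpha$ (and the analogous summands of $\beta,\gamma$) combined with the ``$c\otimes 1\otimes y\mapsto y$'' branch of the isomorphism $\theta P(2)\cong P(2)\langle -1\rangle \oplus P(2)\langle 1\rangle$. These identity entries, appearing in matching positions at consecutive homological degrees, furnish an explicit contracting homotopy that exhibits $P(2)\otimes_B \mathbb{CK}$ as null-homotopic in $\mathcal{D}^>(\gmod-B)$, proving (1). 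The main obstacle is purely combinatorial: tracking the alternating signs of $\beta/\gamma$ and the internal grading shifts carefully enough to verify that the off-diagonal entries cancel pairwise along the proposed homotopy, so that no surviving cohomology remains.
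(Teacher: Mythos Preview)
Your proposal is correct and follows essentially the same route as the paper: apply Lemma~\ref{p1onprojectives} termwise, trace the bimodule maps $\alpha,\beta,\gamma$ through those identifications, and for part~(1) observe that the resulting $2\times 2$ differentials contain identity entries producing a contracting homotopy. The paper carries this out by simply writing down the matrices $\nu,\eta,\zeta$ explicitly and declaring the complex homotopically trivial, while for part~(2) it only cites Lemma~\ref{p1onprojectives}; your write-up supplies a bit more of the intermediate bookkeeping, but the argument is the same.
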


\begin{proof}
Applying the functor $ \mathbb{CK} $ to the module $ P(2) $ gives the complex which is up to shift 2-periodic after the first step:

\begin{displaymath}
\xymatrix{0 \ar[r] &P(2) \ar[r]^{\nu} &M \ar[r]^{\eta} &M \langle -2 \rangle \ar[r]^{\zeta} &M \langle -4 \rangle \ar[r]^{\eta} & \cdots}
\end{displaymath}

where $ M = P(2) \langle -2 \rangle \oplus P(2) $ and

\begin{equation*}
\nu = \begin{pmatrix}
c \\
1
\end{pmatrix}
\hspace{.2in}
\eta = \begin{pmatrix}
-c & 0\\
1 & -c
\end{pmatrix}
\hspace{.2in}
\zeta = \begin{pmatrix}
c & 0\\
1 & c
\end{pmatrix}
\end{equation*}
This is clearly homotopically trivial.
The second isomorphism follows from Lemma ~\ref{p1onprojectives}.
\end{proof}



\subsection{A duality theorem}
\label{dualitytheoremsection}
\begin{lemma}
\label{dualofP(1)}
$ \mathbb{D}P(1) $ is quasi-isomorphic to:
\begin{equation}
\xymatrix{
0 \ar[r] & \ar[r] P(2) \ar[r]^{b} & P(1) \langle -1 \rangle \ar[r] & 0
}
\end{equation}
where the object $ P(1) \langle -1 \rangle $ is in homological degree one.
\end{lemma}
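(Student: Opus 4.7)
The plan is to compute $\mathbb{D}P(1)$ by applying Koszul duality to the radical filtration of $P(1)$. From Section~\ref{p12} the module $P(1) = e(1)B$ has basis $\{e(1), b\}$ and sits in a non-split short exact sequence
$$0 \to L(2)\langle 1\rangle \to P(1) \to L(1) \to 0,$$
with socle $L(2)\langle 1\rangle$ coming from the arrow $b \in e(1)Be(2)$. Since $\mathbb{D}$ is a triangulated functor, applying it and invoking the identifications $\mathbb{D}L(1) \cong P(2)$, $\mathbb{D}L(2) \cong P(1)$ of \eqref{kdm} together with the shift rule \eqref{internalshift} produces a distinguished triangle in $\mathcal{D}^{>}(\gmod-B)$
$$P(1)\langle -1\rangle[-1] \longrightarrow \mathbb{D}P(1) \longrightarrow P(2) \xrightarrow{\delta} P(1)\langle -1\rangle,$$
so it suffices to identify the connecting morphism $\delta$ and exhibit $\mathbb{D}P(1)$ as the corresponding two-term complex.

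Both endpoints of $\delta$ are concentrated in homological degree zero, so $\delta$ is an ordinary morphism of graded $B$-modules. The space $\Hom_{\gmod-B}(P(2), P(1)\langle -1\rangle) \cong (e(1)Be(2))_1$ is one-dimensional, spanned by left multiplication by $b$. Moreover $\delta \neq 0$: under Koszul duality it corresponds to the class of the displayed short exact sequence in $\Ext^1_B(L(1), L(2)\langle 1\rangle)$, which is nonzero since $P(1)$ is indecomposable and the sequence does not split. Rescaling, we may take $\delta = b$.

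Rotating the triangle, $\mathbb{D}P(1)$ is then the two-term complex $0 \to P(2) \xrightarrow{b} P(1)\langle -1\rangle \to 0$ with $P(2)$ in homological degree $0$ and $P(1)\langle -1\rangle$ in homological degree $1$, as desired.

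The main obstacle is the identification of $\delta$ with multiplication by $b$; the rest is formal. An independent sanity check proceeds directly from the definition of $\mathbb{D}$ recalled in Section~\ref{D12}: the bidegrees $(p,q)$ for which $(\mathbb{D}P(1))^p_q \neq 0$ are exactly $(0,0),(0,1),(0,2),(1,-1),(1,0)$, and the differential $\tilde{d}$, computed on the generators $e(1)\otimes e(2), e(1)\otimes a, e(1)\otimes ab$ using the relations $e(1)a = 0$, $e(1)b = b$, and $ba = 0$, collapses after the natural identifications $(\mathbb{D}P(1))^0 \cong P(2)$ and $(\mathbb{D}P(1))^1 \cong P(1)\langle -1\rangle$ to left multiplication by $b$. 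This gives the quasi-isomorphism directly without any appeal to the triangulated machinery.
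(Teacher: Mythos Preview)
Your argument is correct and follows essentially the same route as the paper: both use the short exact sequence $0 \to L(2)\langle 1\rangle \to P(1) \to L(1) \to 0$, pass to the associated distinguished triangle, and apply $\mathbb{D}$ together with \eqref{kdm} and \eqref{internalshift}. You are in fact more careful than the paper's proof, which writes down the resulting triangle without justifying that the connecting map is $b$; your observation that $\Hom_{\gmod-B}(P(2),P(1)\langle -1\rangle)$ is one-dimensional and that $\delta$ is nonzero (because $\mathbb{D}$ is an equivalence and the original extension is nonsplit) fills exactly this gap, and your direct computation from the definition of $\mathbb{D}$ is a welcome independent check.
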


\begin{proof}
There is an exact sequence of modules:
$$ 0 \rightarrow L(2) \langle 1 \rangle \rightarrow P(1) \rightarrow L(1) \rightarrow 0 $$
which gives rise to a triangle in the derived category:
$$ L(1)[-1] \rightarrow L(2) \langle 1 \rangle \rightarrow P(1). $$
Applying the functor $ \mathbb{D} $ to this triangle gives the triangle:
$$ P(2)[-1] \rightarrow P(1)[-1] \langle -1 \rangle \rightarrow \mathbb{D}P(1). $$
\end{proof}

\begin{lemma}
\label{a}
\begin{enumerate}
\item $ \mathbb{CK} \circ \mathbb{D} (P(2)) \cong L(1) \langle -2 \rangle [-2] $
\item $ \mathbb{CK} \circ \mathbb{D} (P(1)) \cong $
\begin{displaymath}
\xymatrix{0 \ar[r] & P(1) \langle -1 \rangle \ar[r]^{b} & P(2) \langle -2 \rangle \ar[r]^{-c} & P(2) \langle -4 \rangle \ar[r]^{c} & P(2) \langle -6 \rangle \ar[r]^{-c} & \cdots}
\end{displaymath}
where the object $ P(1) $ is in homological degree $ 1$.
\end{enumerate}
\end{lemma}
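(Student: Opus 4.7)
The plan is to handle the two parts separately, using in each case the pre-computed form of $\mathbb{D}$ on the relevant projective together with the vanishing $\mathbb{CK}(P(2)) \cong 0$ from Proposition~\ref{CKonprojectives}(1).

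For part~(1), the computation already recorded at the beginning of the subsection on $\mathbb{D} \circ \mathbb{P}$ gives $\mathbb{D}(P(2)) \cong L(1)\langle -2\rangle[-2]$ as a consequence of $P(2) \cong I(2)\langle 2\rangle$, $\mathbb{D}I(2) \cong L(1)$ (both from Lemma~\ref{kdm}) and the shift rules \eqref{internalshift}, \eqref{homshift}. Since $\mathbb{CK}$ is $-\otimes_B \mathbb{CK}$ and commutes with both shifts, part~(1) reduces to the assertion $\mathbb{CK}(L(1)) \cong L(1)$. I would prove this directly from the bimodule-complex definition: the simple module $L(1) = e(1)B/e(1)B_+$ is supported at vertex~$1$, so $L(1)e(2)=0$, and therefore
\[
L(1)\otimes_B \theta \;=\; L(1)e(2)\otimes_{\mathbb{C}} e(2)B\langle -1\rangle \;=\; 0.
\]
Hence every non-initial term of the complex $L(1)\otimes_B \mathbb{CK}$ vanishes, leaving only $L(1)\otimes_B B = L(1)$ in homological degree~$0$.

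For part~(2), Lemma~\ref{dualofP(1)} replaces $\mathbb{D}(P(1))$ by the two-term complex $X = [P(2) \xrightarrow{b} P(1)\langle -1\rangle]$ with $P(1)\langle -1\rangle$ in homological degree~$1$. The stupid-truncation short exact sequence of complexes
\[
0 \to P(1)\langle -1\rangle[-1] \to X \to P(2) \to 0
\]
yields a distinguished triangle to which I would apply the triangulated functor $\mathbb{CK}$. Because $\mathbb{CK}(P(2)) \cong 0$ by Proposition~\ref{CKonprojectives}(1), the resulting triangle collapses to a quasi-isomorphism
\[
\mathbb{CK}(\mathbb{D}(P(1))) \;\cong\; \mathbb{CK}(P(1)\langle -1\rangle)[-1].
\]
By Proposition~\ref{CKonprojectives}(2) and the fact that $\mathbb{CK}$ commutes with internal grading shifts, $\mathbb{CK}(P(1)\langle -1\rangle)$ is the semi-infinite complex $[P(1)\langle -1\rangle \xrightarrow{b} P(2)\langle -2\rangle \xrightarrow{-c} P(2)\langle -4\rangle \xrightarrow{c}\cdots]$ with initial term in homological degree~$0$; the homological shift $[-1]$ then places $P(1)\langle -1\rangle$ in degree~$1$, matching the claim.

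The main obstacle is bookkeeping rather than conceptual: one must carefully track the interplay between the internal shifts $\langle\cdot\rangle$ and homological shifts $[\cdot]$, and verify that the alternating signs $-c,c,-c,\ldots$ on the differentials are preserved by the reduction. These checks follow from the Koszul sign rule applied to the total complex of the bicomplex $X\otimes_B \mathbb{CK}$, together with the observation that tensoring with the initial term $B$ of $\mathbb{CK}$ is the identity while tensoring with any $\theta\langle -k\rangle$ kills anything supported at vertex~$1$, so the explicit terms in the collapsed complex come transparently from those of $\mathbb{CK}(P(1)\langle -1\rangle)$.
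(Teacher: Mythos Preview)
Your argument is correct. For part~(2) you do exactly what the paper does: invoke Lemma~\ref{dualofP(1)} and Proposition~\ref{CKonprojectives}, and your triangle argument makes explicit what the paper leaves as ``follows easily.''

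For part~(1) your route genuinely differs. The paper replaces $\mathbb{D}(P(2))\cong L(1)\langle -2\rangle[-2]$ by the minimal projective resolution $0\to P(1)\langle 2\rangle\to P(2)\langle 1\rangle\to P(1)\to 0$, tensors this with $\mathbb{CK}$ termwise via Proposition~\ref{CKonprojectives} to obtain a double complex, and then observes that the total complex is homotopy equivalent to the original resolution. You instead compute $\mathbb{CK}(L(1))$ directly from $L(1)e(2)=0$, hence $L(1)\otimes_B\theta=0$, so only the degree-zero term $L(1)\otimes_B B=L(1)$ survives. This is legitimate because each term of the bimodule complex $\mathbb{CK}$ is projective as a left $B$-module (indeed $\theta\cong Be(2)\otimes_{\mathbb{C}}e(2)B\langle -1\rangle$), so no resolution of $L(1)$ is required. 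Your approach is shorter and more conceptual; the paper's has the virtue of staying entirely within the projective calculus already set up for Proposition~\ref{CKonprojectives}, which is the framework used again in Lemmas~\ref{functoronc}--\ref{functoronb} when tracking morphisms.
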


\begin{proof}
Recall $ P(2) $ isomorphic to $ I(2) \langle 2 \rangle $ where $ I(2) $ is the injective hull of $ L(2) $ and $ \mathbb{D} I(2) \cong L(1) $.
Then tensoring $ \mathbb{CK} $ with $ 0 \rightarrow P(1) \langle 2 \rangle \rightarrow P(2) \langle 1 \rangle \rightarrow P(1) \rightarrow 0 $ (over $ B$) using Proposition ~\ref{CKonprojectives}, gives the following double complex:
\begin{equation}
\label{aa}
\xymatrix{
& \cdots & \cdots  & \cdots \\
0 \ar[r] & P(2) \langle -3 \rangle \ar[u]^{-c} \ar[r] & 0 \ar[r] \ar[u] & P(2) \langle -5 \rangle \ar[u]^{-c} \ar[r] & 0\\
0 \ar[r] & P(2) \langle -1 \rangle \ar[u]^{c} \ar[r] & 0 \ar[r] \ar[u] & P(2) \langle -3 \rangle \ar[u]^{c} \ar[r] & 0\\
0 \ar[r] & P(2) \langle 1 \rangle \ar[u]^{-c} \ar[r] & 0 \ar[r] \ar[u] & P(2) \langle -1 \rangle \ar[u]^{-c} \ar[r] & 0\\
0 \ar[r] & P(1) \langle 2 \rangle \ar[u]^{b} \ar[r] & 0 \ar[r] \ar[u] & P(1) \ar[u]^{b} \ar[r] & 0\\
& 0 \ar[u] & 0 \ar[u] & 0 \ar[u]
}
\end{equation}

It is easy to see that its double complex is homotopically equivalent to
\begin{equation*}
0 \rightarrow P(1) \langle 2 \rangle \rightarrow P(2) \langle 1 \rangle \rightarrow P(1) \rightarrow 0
\end{equation*}
which is isomorphic to $ L(1) \langle -2 \rangle [-2]$ giving the first isomorphism.

The second isomorphism follows easily from Lemma ~\ref{dualofP(1)} and \eqref{CKonprojectives}.
\end{proof}

Now we summarize the calculations of the functors on projective objects:

\begin{prop}
\label{functorsonprojectives}
\begin{equation*}
\mathbb{D} \circ \mathbb{P}(P(1)) \cong \mathbb{CK} \circ \mathbb{D}(P(1))
\end{equation*}
\begin{equation*}
\mathbb{D} \circ \mathbb{P}(P(2)) \cong \mathbb{CK} \circ \mathbb{D}(P(2))
\end{equation*}
\end{prop}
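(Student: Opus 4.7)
The plan is to address the two isomorphisms separately; both should follow essentially by assembling calculations already carried out in the preceding subsections.

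For the second isomorphism, the argument is immediate. By equation \eqref{ponP(2)} we have $\mathbb{P}(P(2)) = P(2)$, so $\mathbb{D} \circ \mathbb{P}(P(2)) = \mathbb{D}(P(2))$. Combining the identifications $P(2) \cong I(2)\langle 2 \rangle$, $\mathbb{D} I(2) \cong L(1)$ from \eqref{kdm} together with the shift formulas \eqref{internalshift}, we obtain $\mathbb{D} \circ \mathbb{P}(P(2)) \cong L(1)\langle -2\rangle[-2]$. On the other hand, Lemma \ref{a}(1) gives exactly the same expression for $\mathbb{CK} \circ \mathbb{D}(P(2))$, so the two are canonically isomorphic.

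For the first isomorphism, the plan is to unwind the explicit form of $\mathbb{D}\circ\mathbb{P}(P(1))$ already obtained in \eqref{KDBGP(10)} and \eqref{sl2comp}. That complex is, up to the shift $\langle -3\rangle[-3]$, the total complex of the double complex \eqref{complexofcomplexes}, which coincides with the middle complex in \eqref{sl2comp}. The text already observed that this middle complex decomposes as the direct sum of the two columns displayed there. Hence it suffices to (i) verify that the left column is null-homotopic, and (ii) identify the right column, after shifting back by $\langle -3\rangle[-3]$, with the complex given for $\mathbb{CK}\circ\mathbb{D}(P(1))$ in Lemma \ref{a}(2).

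For step (i), I would write down an explicit contracting homotopy. The maps $A_n$ for $n\geq 2$ have the form $\begin{pmatrix} 0 & 1 \\ 0 & 0\end{pmatrix}$ and $A_1 = \begin{pmatrix} 1 \\ 0\end{pmatrix}$, so the homotopy $h$ sending $(x,y)\mapsto(0,x)$ at each stage (and extracting the first coordinate from the single-summand term) satisfies $Ah + hA = \mathrm{id}$ by a direct matrix calculation. For step (ii), the right column reads, from the bottom up, $P(1)\langle 2\rangle \to P(2)\langle 1\rangle \to P(2)\langle -1\rangle \to P(2)\langle -3\rangle \to \cdots$ with differentials $C_0 = a$, $C_i = c$, sitting in homological degrees $-2, -1, 0, 1, \ldots$. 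Applying $\langle -3\rangle[-3]$ produces the complex $P(1)\langle -1\rangle \to P(2)\langle -2\rangle \to P(2)\langle -4\rangle \to \cdots$ with $P(1)\langle -1\rangle$ in homological degree $1$, which is precisely the complex in Lemma \ref{a}(2).

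The main obstacle will be bookkeeping the signs and shifts: the differentials $C_i$ in \eqref{sl2comp} are written without the alternating signs $-c, c, -c, \ldots$ that appear in Lemma \ref{a}(2), and these signs must be tracked back through the total-complex convention, the definition of $\tilde d$ in the Koszul duality functor, and the projective resolution of $L(1)$ used in \eqref{complexofcomplexes}. Once the signs are reconciled and the homotopy above is verified, the proposition follows by combining the contractibility of the left column with the matching of the right column.
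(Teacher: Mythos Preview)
Your proposal is correct and is exactly what the paper intends: the proposition is stated there as a summary with no proof, so the argument is precisely the assembly of the prior computations---equation \eqref{ponP(2)} and \eqref{kdm} on one side, Lemma~\ref{a} on the other, together with the direct-sum decomposition of \eqref{sl2comp}. Your explicit contracting homotopy for the left column and the shift-matching for the right column are the natural steps the paper leaves implicit; note also that the first differential in the right column is $C_0=a$ while Lemma~\ref{a}(2) writes $b$ for the corresponding map $P(1)\langle -1\rangle\to P(2)\langle -2\rangle$, which is a labeling slip in the paper rather than a mathematical discrepancy, and the alternating signs you flag are indeed absorbed by the totalization convention.
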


Now we check that
$ \mathbb{D} \circ \mathbb{P} $ equals $ \mathbb{CK} \circ \mathbb{D} $
on maps.

\begin{lemma}
\label{functoronc}
$ \mathbb{D} \circ \mathbb{P}(c)  = \mathbb{CK} \circ \mathbb{D}(c). $
\end{lemma}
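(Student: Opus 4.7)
The plan is to compute $\mathbb{CK}\circ\mathbb{D}(c)$ explicitly at the chain level and match the result against the chain map description of $\mathbb{D}\circ\mathbb{P}(c)$ provided in Lemma~\ref{Dpc}. First, I would unpack $\mathbb{D}(c)$: since the chain-level formula defining $\mathbb{D}$ is functorial in the input, the morphism $c$ between appropriate shifts of $P(2)$ induces an explicit chain map between the Koszul dual representatives of $\mathbb{D}(P(2))$ and $\mathbb{D}(P(2)\langle -2\rangle)$. Using $\mathbb{D}(P(2))\cong L(1)\langle -2\rangle[-2]$ from \eqref{kdm} together with the shift rules \eqref{internalshift} and \eqref{homshift}, I would replace each side by the appropriate internal/homological shift of the standard projective resolution $0 \to P(1)\langle 2\rangle \to P(2)\langle 1\rangle \to P(1) \to 0$ of $L(1)$.

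Second, I would apply $\mathbb{CK}$ term-by-term to these resolutions, producing on each side a bicomplex whose columns are copies of the explicit complexes $\mathbb{CK}(P(1))$ and $\mathbb{CK}(P(2))$ computed in Proposition~\ref{CKonprojectives}. The chain map from the first step induces a morphism between these two bicomplexes, and hence between their total complexes. I would then invoke the homotopy equivalences already identified in the proof of Lemma~\ref{a} to collapse each total complex to its simplified representative (namely the complexes displayed in Lemma~\ref{a}(1) and (2)), tracking the induced morphism through the collapse. By naturality of the homotopy equivalences with respect to chain maps, the surviving morphism should reduce to the identity on the $P(1)\langle -2\rangle$ component placed in the homological and internal degree dictated by the shift formulas, which is precisely the chain map drawn in Lemma~\ref{Dpc}.

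As a sanity check, I would track the image of the identity $e(1)$ of the surviving $P(1)\langle -2\rangle$ through the entire calculation in order to verify that the morphism does not collapse to zero. Alternatively, an Ext computation shows that the relevant Hom space $\Hom_{\mathcal{D}^>(\gmod-B)}(\mathbb{D}\circ\mathbb{P}(P(2)\langle 2\rangle),\mathbb{D}\circ\mathbb{P}(P(2)))$ is one-dimensional (matching $\Hom_{\gmod-B}(P(2)\langle 2\rangle,P(2))=\mathbb{C}\cdot c$ across Koszul duality together with Proposition~\ref{functorsonprojectives}), so once nonvanishing is confirmed, equality is forced up to a scalar, and the identity coefficient on the surviving $P(1)\langle -2\rangle$ pins down the scalar.

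The main obstacle is the bookkeeping: pushing the induced morphism through the bicomplex collapse while keeping internal and homological degree shifts coherent, and correctly tracking the signs coming from the differential $\tilde{d}(m\otimes g) = dm\otimes g + (-1)^{r+s}(ma\otimes ag + mb\otimes bg)$ of $\mathbb{D}$ together with the total-complex sign convention. Once conventions are fixed consistently with those of Lemma~\ref{Dpc}, the comparison reduces to inspection.
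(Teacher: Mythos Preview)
Your proposal is correct and follows essentially the same route as the paper: compute $\mathbb{D}(c)$ as an explicit chain map between shifted copies of the projective resolution of $L(1)$, tensor with $\mathbb{CK}$ to obtain a map of bicomplexes, pass to total complexes, and compare with the chain map in Lemma~\ref{Dpc}. The paper carries this out by simply displaying the bicomplex map (the identity on the matching columns) and reading off the induced map on totalizations; your additional remarks about tracking $e(1)$ and the one-dimensionality of the relevant Hom space are not needed but do no harm.
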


\begin{proof}
The morphism $ c \colon P(2) \langle 2 \rangle \rightarrow P(2) $ gets mapped under Koszul duality to the morphism $ \mathbb{D}(c) \colon\quad $
\begin{equation*}
\xymatrix{
  & & 0 \ar[r] & P(1) \langle -2 \rangle \ar[r] \ar[d]^{1}& P(2) \langle -3 \rangle \ar[r] & P(1) \langle -4 \rangle \ar[r] & 0\\
 0 \ar[r] & P(1) \ar[r] & P(2) \langle -1 \rangle \ar[r] & P(1) \langle -2 \rangle \ar[r] & 0
}
\end{equation*}

Tensoring with the complex $ \mathbb{CK} $ gives a map of bicomplexes as shown in ~\eqref{ccomplex} where the bicomplex $  \mathbb{D}(P(2) \langle 2 \rangle) \otimes_B \mathbb{CK} $ is on the right.
The morphism between complexes is given by the curved arrows and the arrows which would be zero have been omitted.

\begin{equation}
\label{ccomplex}
\xymatrix{
\cdots & \cdots  & \cdots
& \cdots & \cdots  & \cdots \\
P(2) \langle -3 \rangle \ar[u]^{-c} \ar[r] & 0 \ar[r] \ar[u] & P(2) \langle -5 \rangle \ar[u]^{-c}
&P(2) \langle -5 \rangle \ar[u]^{-c} \ar[r] \ar@/^1pc /[l]^{1} & 0 \ar[r] \ar[u] & P(2) \langle -7 \rangle \ar[u]^{-c}  \\
P(2) \langle -1 \rangle \ar[u]^{c} \ar[r] & 0 \ar[r] \ar[u] & P(2) \langle -3 \rangle \ar[u]^{c}
&P(2) \langle -3 \rangle \ar[u]^{c} \ar[r] \ar@/^1pc /[l]^{1} & 0 \ar[r] \ar[u] & P(2) \langle -5 \rangle \ar[u]^{c} \\
P(2) \langle 1 \rangle \ar[u]^{-c} \ar[r] & 0 \ar[r] \ar[u] & P(2) \langle -1 \rangle \ar[u]^{-c}
&P(2) \langle -1 \rangle \ar[u]^{-c} \ar[r] \ar@/^1pc /[l]^{1} & 0 \ar[r] \ar[u] & P(2) \langle -3 \rangle \ar[u]^{-c} \\
P(1) \langle 2 \rangle \ar[u]^{b} \ar[r] & 0 \ar[r] \ar[u] & P(1) \ar[u]^{b}
& P(1) \ar[u]^{b} \ar[r]  \ar@/^1pc /[l]^{1}& 0 \ar[r] \ar[u] & P(1) \langle -2 \rangle \ar[u]^{b} \\
0 \ar[u] & 0 \ar[u] & 0 \ar[u]
&0 \ar[u] & 0 \ar[u] & 0 \ar[u]
}
\end{equation}

This induces
$ \mathbb{CK} \circ \mathbb{D}(P(2) \langle 2 \rangle) \rightarrow \mathbb{CK} \circ \mathbb{D}(P(2)) $ on total complexes:
\begin{equation*}
\xymatrix{
  & & 0 \ar[r] & P(1) \ar[r] \ar[d]^{1}& P(2) \langle -1 \rangle \ar[r] & P(1) \langle -2 \rangle \ar[r] & 0\\
 0 \ar[r] & P(1) \langle 2 \rangle \ar[r] & P(2) \langle 1 \rangle \ar[r] & P(1) \ar[r] & 0
}
\end{equation*}

This is equal to the map $ \mathbb{D} \circ \mathbb{P}(c) $ and the claim follows.
\end{proof}

\begin{lemma}
\label{functorona}
$ \mathbb{D} \circ \mathbb{P}(a)  = \mathbb{CK} \circ \mathbb{D}(a). $
\end{lemma}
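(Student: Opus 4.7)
The plan is to mirror the proof of Lemma \ref{functoronc} step by step. First, apply the Koszul duality functor $\mathbb{D}$ to the morphism $a\colon P(1)\to P(2)\langle -1\rangle$ of indecomposable projectives. Using the explicit formula for $\mathbb{D}$ on morphisms from Section \ref{D12}, combined with Lemma \ref{dualofP(1)} and the identification $\mathbb{D}(P(2))\cong L(1)\langle -2\rangle[-2]$ from \eqref{kdm}, one obtains a chain map $\mathbb{D}(a)$ between the corresponding two-term resolutions of $\mathbb{D}(P(1))$ and $\mathbb{D}(P(2))$ whose only nonzero component is an identity map connecting the $P(1)\langle -2\rangle$ summands in a single homological degree.

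Second, tensor this chain map over $B$ with the Cooper-Krushkal complex $\mathbb{CK}$. Using Proposition \ref{CKonprojectives} to identify $P(1)\otimes_B\mathbb{CK}$ with the infinite $2$-periodic tail and $P(2)\otimes_B\mathbb{CK}\simeq 0$, one obtains a map of bicomplexes completely analogous to \eqref{ccomplex}; the curved arrows encoding the morphism sit exactly in the rows corresponding to the identity component of $\mathbb{D}(a)$.

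Third, pass to total complexes. On the source side, the total complex collapses (via the same homotopy argument used in Proposition \ref{CKonprojectives} and Lemma \ref{a}) to the infinite complex appearing as $\mathbb{CK}\circ\mathbb{D}(P(1))$, whereas on the target side the total complex is the finite resolution $L(1)\langle -2\rangle[-2]$. The induced chain map between these two complexes should then be identified with the map displayed in Lemma \ref{Dpa} for $\mathbb{D}_{1,2}\circ\widetilde{p}_{1,2}(a)$, namely the identity on the surviving $P(1)\langle -2\rangle$ term and zero elsewhere.

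The main obstacle, exactly as in the proof of Lemma \ref{functoronc}, is bookkeeping: keeping track of the sign rule $\tilde d(m\otimes g)=dm\otimes g+(-1)^{r+s}(ma\otimes ag+mb\otimes bg)$, the internal shifts $\langle\cdot\rangle$, and the homological position of the $2$-periodic tail. In particular, one must verify that the infinite $c$-tail in $\mathbb{D}\circ\mathbb{P}(P(1))$ from \eqref{ponP(1)} matches, term by term and sign by sign, the tail produced by tensoring the two-term resolution of $\mathbb{D}(P(1))$ from Lemma \ref{dualofP(1)} with $\mathbb{CK}$; this is where the vanishing $\mathbb{CK}(P(2))\cong 0$ forces the map to reduce to the single identity displayed in Lemma \ref{Dpa}, completing the identification $\mathbb{D}\circ\mathbb{P}(a)=\mathbb{CK}\circ\mathbb{D}(a)$.
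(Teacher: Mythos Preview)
Your strategy is exactly the paper's: compute $\mathbb{D}(a)$ explicitly as a chain map between projective models of $\mathbb{D}(P(1)\langle 1\rangle)$ and $\mathbb{D}(P(2))$, tensor with $\mathbb{CK}$ to get a map of bicomplexes, and pass to total complexes to recover the map displayed for $\mathbb{D}\circ\mathbb{P}(a)$.

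There is, however, a factual slip in your first step. The target $\mathbb{D}(P(2))\cong L(1)\langle -2\rangle[-2]$ is represented by the \emph{three}-term resolution $0\to P(1)\xrightarrow{a} P(2)\langle -1\rangle\xrightarrow{b} P(1)\langle -2\rangle\to 0$, not a two-term one, and the chain map $\mathbb{D}(a)$ has \emph{two} nonzero components: the identity on $P(2)\langle -1\rangle$ as well as the identity on $P(1)\langle -2\rangle$. A map with only the latter component fails to commute with the differential $b$ and is therefore not a chain map at all. This is not cosmetic: you must have a genuine chain map before you are allowed to tensor with $\mathbb{CK}$. The extra identity on $P(2)\langle -1\rangle$ then becomes invisible only \emph{after} tensoring, since $\mathbb{CK}(P(2))\simeq 0$, which is why the bicomplex picture (the analogue of \eqref{ccomplex}) shows curved arrows solely along the $P(1)\langle -2\rangle$ column. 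With this correction your argument goes through and coincides with the paper's.
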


\begin{proof}
The morphism $ a \colon P(1) \langle 1 \rangle \rightarrow P(2) $ gets mapped under Koszul duality to the morphism $ \mathbb{D}(a) \colon$
\begin{equation*}
\xymatrix{
& 0 \ar[r] \ar[d] & P(2) \langle -1 \rangle \ar[d]^{1} \ar[r]^{b} & P(1) \langle -2 \rangle \ar[d]^{1} \ar[r] & 0\\
0 \ar[r] & P(1) \ar[r]^{a} & P(2) \langle -1 \rangle \ar[r]^{b} & P(1) \langle -2 \rangle \ar[r] & 0
}
\end{equation*}

Tensoring with the complex $ \mathbb{CK} $ gives rise to the morphism of bicomplexes indicated by the curved arrows in \eqref{acomplex}, where for simplicity all the zero maps are omitted.

\begin{equation}
\label{acomplex}
\xymatrix{
\cdots & \cdots  & \cdots
& \cdots & \cdots \\
P(2) \langle -5 \rangle \ar[u]^{-c} \ar[r] & 0 \ar[r] \ar[u] & P(2) \langle -7 \rangle \ar[u]^{-c}
& 0 \ar[r] \ar[u] &P(2) \langle -7 \rangle \ar[u]^{-c} \ar@/^1pc /[ll]^>{1}   \\
P(2) \langle -3 \rangle \ar[u]^{c} \ar[r] & 0 \ar[r] \ar[u] & P(2) \langle -5 \rangle \ar[u]^{c}
& 0 \ar[r] \ar[u] &P(2) \langle -5 \rangle \ar[u]^{c} \ar@/^1pc /[ll]^>{1}  \\
P(2) \langle -1 \rangle \ar[u]^{-c} \ar[r] & 0 \ar[r] \ar[u] & P(2) \langle -3 \rangle \ar[u]^{-c}
& 0 \ar[r] \ar[u] &P(2) \langle -3 \rangle \ar[u]^{-c}  \ar@/^1pc /[ll]^>{1}  \\
P(1)  \ar[u]^{b} \ar[r] & 0 \ar[r] \ar[u] & P(1) \langle -2 \rangle \ar[u]^{b}
& 0 \ar[r] \ar[u] & P(1) \langle -2 \rangle \ar[u]^{b}  \ar@/^1pc /[ll]^>{1} \\
0 \ar[u] & 0 \ar[u] & 0 \ar[u]
& 0 \ar[r] \ar[u] &0 \ar[u]
}
\end{equation}

It induces the following map
$ \mathbb{CK} \circ \mathbb{D}(P(1) \langle 1 \rangle) \rightarrow \mathbb{CK} \circ \mathbb{D}(P(2)) $ on total complexes:

\begin{equation*}
\xymatrix{
& & 0 \ar[r] & P(1) \langle -2 \rangle \ar[r]^{C_0} \ar[d]^{1} & P(2) \langle -3 \rangle \ar[r]^{C_1} & P(2) \langle -5 \rangle \ar[r]^{C_2} & \cdots \\
0 \ar[r] & P(1) \ar[r]^{} & P(2) \langle -1 \rangle \ar[r] & P(1) \langle -2 \rangle \ar[r] & 0
}
\end{equation*}
This is the map $ \mathbb{D} \circ \mathbb{P}(a) $ and the claim follows.
\end{proof}

\begin{lemma}
\label{functoronb}
$ \mathbb{D} \circ \mathbb{P}(b)  = \mathbb{CK} \circ \mathbb{D}(b). $
\end{lemma}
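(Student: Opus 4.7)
The approach mirrors the proofs of Lemmas \ref{functoronc} and \ref{functorona}: first describe $\mathbb{D}(b)$ as an explicit chain map between the complexes representing $\mathbb{D}(P(2)\langle 1 \rangle)$ and $\mathbb{D}(P(1))$; next tensor with the Cooper-Krushkal complex $\mathbb{CK}$ to obtain a morphism of bicomplexes; then pass to total complexes; and finally compare the result with the explicit formula for $\mathbb{D}\circ\mathbb{P}(b)$ recorded in \eqref{Dpb}.

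First I would write out $\mathbb{D}(b)$. By the shift rule \eqref{internalshift}, the source $\mathbb{D}(P(2)\langle 1 \rangle)$ equals $\mathbb{D}(P(2))\langle -1 \rangle[-1]$, which after unfolding is the three-term complex $0 \to P(1)\langle -1 \rangle \to P(2)\langle -2 \rangle \to P(1)\langle -3 \rangle \to 0$ with leftmost term sitting in homological degree $1$. By Lemma \ref{dualofP(1)}, the target $\mathbb{D}(P(1))$ is quasi-isomorphic to $0 \to P(2) \to P(1)\langle -1 \rangle \to 0$ with $P(1)\langle -1 \rangle$ in homological degree $1$. Unpacking the bigraded definition of $\mathbb{D}$ applied to the degree-one morphism $b \colon P(2)\langle 1 \rangle \to P(1)$ then produces a chain map whose only nonzero component is the identity on the common summand $P(1)\langle -1 \rangle$ in homological degree $1$; all other candidate components vanish either for bidegree reasons or because they involve the defining relation $ba=0$ in $B$.

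Next, applying $- \otimes_B \mathbb{CK}$ to this chain map yields a morphism of bicomplexes of the shape displayed in \eqref{ccomplex} and \eqref{acomplex}. Proposition \ref{CKonprojectives} identifies the individual columns, and all curved arrows crossing between the source and target bicomplexes vanish except for a single column of identity maps induced by the surviving component of $\mathbb{D}(b)$. Totalizing, the source becomes a bounded complex quasi-isomorphic to a shift of $L(1)$ as in the proof of Lemma \ref{a}(1), while the target is the infinite complex of Lemma \ref{a}(2). A direct reading of the induced arrow on total complexes then yields precisely the map written down in \eqref{Dpb}.

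The main obstacle is the grading and homological bookkeeping: equation \eqref{internalshift} turns an internal shift into a simultaneous internal-and-homological shift, so particular care is needed to pin down the homological position of the surviving identity component of $\mathbb{D}(b)$, and consequently to align the source and target total complexes with the arrows displayed in \eqref{Dpb}. Once the degrees are tracked correctly, the commutativity of the single nontrivial square in the bicomplex morphism follows from the fact that the leftmost nonzero row of $\mathbb{CK}\circ\mathbb{D}(P(1))$ begins with $b \colon P(1)\langle -1 \rangle \to P(2)\langle -2 \rangle$, matching the leftmost downward arrow in \eqref{Dpb}. All remaining squares commute trivially since they involve a zero entry.
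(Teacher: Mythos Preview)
Your overall strategy matches the paper's, and your description of $\mathbb{D}(b)$ is correct: its only nonzero component is the identity on $P(1)\langle -1\rangle$ in homological degree~$1$.  The gap appears in the next step.  You assert that after tensoring with $\mathbb{CK}$ ``all curved arrows \ldots\ vanish except for a single column of identity maps,'' and that ``all remaining squares commute trivially since they involve a zero entry.''  But the map recorded in \eqref{Dpb} has \emph{three} nonzero vertical components, namely $1$, $1$, and $a\colon P(1)\langle -3\rangle \to P(2)\langle -4\rangle$.  The third one is not optional: if you replace it by zero, the middle square fails to commute because $c\circ 1 = c \neq 0 = 0\circ b$.  So a description with only identity arrows cannot reproduce \eqref{Dpb}.

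What you are missing is the effect of the reduction that identifies the source total complex with the three-term resolution of $L(1)\langle -3\rangle$.  The columns corresponding to $P(2)$-terms are only \emph{homotopically} trivial, not literally zero; contracting them (or, equivalently, composing with the quasi-isomorphism to the short resolution) perturbs the induced map and produces the extra diagonal component.  In the paper's diagram \eqref{bcomplex} this appears explicitly as the curved arrow labeled $a$ from $P(1)\langle -3\rangle$ (third column of the source) to $P(2)\langle -4\rangle$ (target column, two steps up).  To complete your argument you need to track this perturbation and verify that it is exactly $a$; once that is done, the induced map on total complexes is the one in \eqref{Dpb} and the lemma follows.
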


\begin{proof}
The morphism $ b \colon P(2) \langle 1 \rangle \rightarrow P(1) $ gets mapped under Koszul duality to the morphism $ \mathbb{D}(b) \colon\quad $
\begin{equation*}
\xymatrix{
& 0 \ar[r] & P(1) \langle -1 \rangle \ar[r]^{a} \ar[d] & P(2) \langle -2 \rangle \ar[r]^{b} & P(1) \langle -3 \rangle \ar[r] & 0\\
0 \ar[r] & P(2) \ar[r]^{b} & P(1) \langle -1 \rangle \ar[r] & 0
}
\end{equation*}

Tensoring with the complex $ \mathbb{CK} $ gives rise to the morphism of bicomplexes indicated by the curved arrows in \eqref{bcomplex}, where for simplicity all the zero maps are omitted.

\begin{equation}
\label{bcomplex}
\xymatrix{
\cdots & \cdots  & \cdots
& \cdots & \cdots \\
P(2) \langle -6 \rangle \ar[u]^{-c} \ar[r] & 0 \ar[r] \ar[u] & P(2) \langle -8 \rangle \ar[u]^{-c}
& 0 \ar[r] \ar[u] &P(2) \langle -6 \rangle \ar[u]^{-c}   \\
P(2) \langle -4 \rangle \ar[u]^{c} \ar[r] & 0 \ar[r] \ar[u] & P(2) \langle -6 \rangle \ar[u]^{c}
& 0 \ar[r] \ar[u] &P(2) \langle -4 \rangle \ar[u]^{c}   \\
P(2) \langle -2 \rangle \ar[u]^{-c} \ar[r] \ar@/_1pc /[rrrr]_>{1} & 0 \ar[r] \ar[u] & P(2) \langle -4 \rangle \ar[u]^{-c}
& 0 \ar[r] \ar[u] &P(2) \langle -2 \rangle \ar[u]^{-c}  \\
P(1) \langle -1 \rangle \ar[u]^{b} \ar[r] \ar@/_1pc /[rrrr]_>{1} & 0 \ar[r] \ar[u] & P(1) \langle -3 \rangle \ar[u]^{b} \ar@/_1pc /[rruu]^<{a}
& 0 \ar[r] \ar[u] & P(1) \langle -1 \rangle \ar[u]^{b}  \\
0 \ar[u] & 0 \ar[u] & 0 \ar[u]
& 0 \ar[r] \ar[u] &0 \ar[u]
}
\end{equation}

This induces the following map on total complexes
$ \mathbb{CK} \circ \mathbb{D}(P(2) \langle 1 \rangle) \rightarrow \mathbb{CK} \circ \mathbb{D}(P(1)) $:

\begin{equation*}
\xymatrix{
0 \ar[r] & P(1) \langle -1 \rangle \ar[r]^{a} \ar[d]^{1} & P(2) \langle -2 \rangle \ar[r]^{b} \ar[d]^{1} & P(1) \langle -3 \rangle \ar[r] \ar[d]^{a} & 0 & \\
0 \ar[r] & P(1) \langle -1 \rangle \ar[r]^{a} & P(2) \langle -2 \rangle \ar[r]^{c} & P(2) \langle -4 \rangle \ar[r]^{c} & P(2) \langle -6 \rangle \ar[r]^{c} & \cdots
}
\end{equation*}

This is the map $ \mathbb{D} \circ \mathbb{P}(b) $ and the claim follows.

\end{proof}

\begin{theorem}
\label{sl2theorem}
There is an isomorphism of functors:
\begin{equation*}
\mathbb{D} \circ \mathbb{P} \cong \mathbb{CK} \circ \mathbb{D}.
\end{equation*}
\end{theorem}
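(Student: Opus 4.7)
The plan is to assemble the termwise comparisons carried out in the preceding lemmas into a single natural isomorphism of functors. By Proposition \ref{functorsonprojectives}, there are object-level isomorphisms $\eta_{P(i)} \colon \mathbb{D} \circ \mathbb{P}(P(i)) \iso \mathbb{CK} \circ \mathbb{D}(P(i))$ for $i = 1, 2$. Because both composite functors intertwine internal shifts with the combined grading-homological shift (for $\mathbb{D}$ this is recorded as \eqref{internalshift}, and $\mathbb{CK}$ commutes with internal shifts since $\theta$ is a graded bimodule), these extend canonically to isomorphisms $\eta_{P(i)\langle r \rangle}$ for every $r \in \mZ$.

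Next, Lemmas \ref{functoronc}, \ref{functorona}, \ref{functoronb} establish that these isomorphisms fit into commutative squares along the three nontrivial homogeneous generators $a, b, c$ of the graded morphism algebra between $P(1)$ and $P(2)$. Since every graded morphism between shifted projectives is a linear combination of compositions of $a, b, c$ together with the idempotents $e(1), e(2)$ (for which the naturality squares are automatic by functoriality and additivity), this upgrades the family $\eta_{P(i)\langle r \rangle}$ to a natural isomorphism between the restrictions of $\mathbb{D} \circ \mathbb{P}$ and $\mathbb{CK} \circ \mathbb{D}$ to the full additive subcategory of $\mathcal{D}^<(\gmod-B)$ spanned by grading shifts of $P(1), P(2)$.

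To extend to all of $\mathcal{D}^<(\gmod-B)$, I would resolve an arbitrary object $X$ by a projective complex $P_\bullet \to X$ with each term a direct sum of shifts of $P(1), P(2)$ and each differential a matrix in the generating morphisms $\{e(i), a, b, c\}$. Applying either composite functor termwise produces a double complex, and the lemmas ensure the termwise $\eta$'s commute with every image differential. Passing to total complexes yields the desired isomorphism $\mathbb{D} \circ \mathbb{P}(X) \iso \mathbb{CK} \circ \mathbb{D}(X)$, and naturality in $X$ follows from the standard lifting of chain maps between projective resolutions together with additivity of both functors.

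The anticipated main obstacle is convergence of the totalization: $\mathbb{P}(P(1))$ is already an unbounded complex of projectives and $\mathbb{CK}$ likewise produces unbounded tensor bicomplexes, so termwise isomorphisms need not a priori totalize to well-defined morphisms. However, the boundedness conditions defining $\mathcal{D}^>(\gmod-B)$ that are recalled in Section \ref{lietheory} were put in place precisely so that in each bidegree only finitely many summands contribute, ensuring the total complexes exist in $\mathcal{D}^>(\gmod-B)$ and that the termwise isomorphisms globalize to the claimed natural equivalence.
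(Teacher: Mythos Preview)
Your proposal is correct and follows the same approach as the paper: the paper's own proof is a one-line citation of Proposition~\ref{functorsonprojectives} and Lemmas~\ref{functoronc}, \ref{functorona}, \ref{functoronb}, and you have simply made explicit the standard argument (extension along grading shifts, naturality on generating morphisms, passage to arbitrary objects via projective resolutions and totalization) that the paper leaves to the reader. Your attention to the convergence issue for the unbounded total complexes is a welcome addition that the paper does not spell out.
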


\begin{proof}
This follows from Proposition ~\ref{functorsonprojectives} and Lemmas ~\ref{functoronc}, ~\ref{functorona}, and ~\ref{functoronb}.
\end{proof}

C.S.\\ Department of Mathematics, Endenicher Allee 60, 53115 Bonn  (Germany).\\ email: \email{stroppel@math.uni-bonn.de}

J.S.\\ Max Planck Institute for Mathematics, Vivatsgasse 7, 53111 Bonn (Germany). \\ email: \email{sussan@mpim-bonn.mpg.de}


\begin{thebibliography}{abcd 99}

\bibitem{AcharStroppel} P. Achar, C. Stroppel: Completions of Grothendieck Groups,  arXiv:1105.2715.






\bibitem {BN} D. Bar-Natan, KhovanovÕs homology for tangles and cobordisms, \emph{Geom. Topol.},  {\bf 9}, (2005), 1443--1499.





\bibitem{BGS} A. Beilinson, V. Ginzburg, W. Soergel, Koszul duality patterns in representation theory, J. Amer. Math. Soc. {\bf 9} (1996), no. 2, 473-527.




\bibitem{BS3} J. Brundan, C. Stroppel: Highest weight categories arising from Khovanov's diagram algebra III: category $\mathcal{O}$, \emph{Represent. Theory} {\bf 15} (2011), 170-243.





\bibitem{CoKr} B. Cooper, S. Krushkal,  Categorification of the Jones-Wenzl Projectors, arXiv:1005.5117.








\bibitem{FK} I. Frenkel, M. Khovanov, Canonical Bases in tensor products and graphical calculus for $ U_q(sl_2), $ \emph{Duke Math J.} {\bf 87}, 3 (1997), 409-480.

\bibitem{FKS} I. Frenkel, M. Khovanov, C. Stroppel, A categorification of finite-dimensional irreducible representations of quantum $ \mathfrak{sl}_2 $ and their tensor products, \emph{Selecta Math.} (N.S.) {\bf 12} (2006), 379-431.

\bibitem{FSS1} I. Frenkel, C. Stroppel, J. Sussan, Categorifying fractional Euler characteristics, Jones-Wenzl projector and $3j$-symbols, arXiv:1007.4680.













\bibitem{KasSch} M. Kashiwara, P. Schapira, Sheaves on Manifolds, Springer-Verlag, 1994.




\bibitem{KhovJones} M. Khovanov, A categorification of the Jones polynomial, \emph{Duke Math. J.} {\bf 101} (2000), no. 3, 359--426.





\bibitem{KS} M. Khovanov, P. Seidel, Quivers, Floer cohomology, and braid group actions, \emph{J. Amer. Math. Soc.}, {\bf 15} (2002), no. 1, 203--271.




\bibitem{MiSerre} J.-I. Miyachi, Localization of Triangulated Categories
and Derived Categories, \emph{J. Algebra} {\bf 141},  (1991), 463-483.

\bibitem{Miy} J.-I. Miyachi, Grothendieck groups of unbounded complexes
of finitely generated modules, \emph{Arch. Math.} {\bf 86} (2006) 317–320.




\bibitem{MOS} V. Mazorchuk, S. Ovsienko, C. Stroppel, Quadratic duals, Koszul dual functors, and applications, \emph{Trans. Amer. Math. Soc.}, {\bf 361}, (2009), 1129--1172.





\bibitem{Roz} L. Rozansky, An infinite torus braid yields a categorified Jones-Wenzl projector, arXiv:1005.3266.

\bibitem{Soperv} W. Soergel, Kategorie $ \mathcal{O}$,  perverse Garben und Moduln \"uber den Koinvarianten zur Weylgruppe, \emph{J. Amer. Math. Soc}, {\bf 3} (1990), no. 2, 421-445.







\bibitem{Strgrad} C. Stroppel, Category $ \mathcal{O} $: Gradings and Translation Functors., \emph{J. Algebra} {\bf 268}, (2003), no. 1, 301--326.





\bibitem{StrSpringer} C. Stroppel, Parabolic category O, perverse sheaves on Grassmannians, Springer fibres and Khovanov homology, \emph{Compositio Math.} {\bf 145}, (2009), {954–992}.







\end{thebibliography}
\end{document}